\newcommand{\eps}{\varepsilon}
\newcommand{\avgdeg}{\overline{\Delta}}
\newcommand{\avgmult}{\overline{\mu}}
\renewcommand{\theenumi}{$(\roman{enumi})$}
\renewcommand{\labelenumi}{\theenumi}
\setlist{nolistsep, noitemsep}
\newcommand{\keep}{\ensuremath\mathrm{Keep}}
\newcommand{\Prob}[1]{\ensuremath{%
    \mathbb P\left[#1\right]
  }}
\newcommand{\Expect}[1]{\ensuremath{%
    \mathbb E\left[#1\right]
  }}
\newcommand{\colouredVtcs}{\ensuremath A^{\mathrm{col}}}
\newcommand{\availableList}{\ensuremath L^\phi}
\newcommand{\remainingList}{\ensuremath L^\phi_{\mathrm{uncol}}}
\newcommand{\remainingColours}{\ensuremath H^\phi_{\mathrm{uncol}}}
\newcommand{\remainingColoursRV}{\ensuremath \mathbf{\#useable\mathunderscore cols}}
\newcommand{\removedColoursRV}{\ensuremath \mathbf{\#unuseable\mathunderscore cols}}
\newcommand{\oldColourDegRV}{\ensuremath \mathbf{remaining\mathunderscore cols'\mathunderscore old\mathunderscore deg}}
\newcommand{\relColourDegRV}{\ensuremath \mathbf{relevant\mathunderscore cols'\mathunderscore lost\mathunderscore deg}}
\newcommand{\totalActivatedNbrsRV}{\ensuremath \mathbf{\#activated\mathunderscore nbrs}}
\newcommand{\uncolActivatedNbrsRV}{\ensuremath \mathbf{\#uncoloured\mathunderscore nbrs}}
\newcommand{\colActivatedNbrsRV}{\ensuremath \mathbf{\#coloured\mathunderscore nbrs}}
\newcommand{\sampleSpace}{\ensuremath \mathbf\Omega}
\newcommand{\sigmaAlg}{\ensuremath \mathbf\Sigma}
\newcommand{\conflictsRV}{\ensuremath \mathbf{\#conflicts}}
\newcommand{\change}{\ensuremath \delta}
\newtheorem{theorem}{Theorem}
\newtheorem{proposition}[theorem]{Proposition}
\newtheorem{lemma}[theorem]{Lemma}
\newtheorem*{chernoff}{The Chernoff Bound}
\newtheorem*{slll}{The Lov\'asz Local Lemma}
\newtheorem{claim}[theorem]{Claim}
\title{Colourings, transversals and local sparsity}
\author{
Ross J. Kang
\thanks{Department of Mathematics, Radboud University Nijmegen, Netherlands. 
Email: \protect\href{mailto:ross.kang@gmail.com}{\protect\nolinkurl{ross.kang@gmail.com}}. Supported by a Vidi grant (639.032.614) of the Netherlands Organisation for Scientific Research (NWO).}
\and
Tom Kelly
\thanks{School of Mathematics, University of Birmingham, UK. 
Email: \protect\href{mailto:T.J.Kelly@bham.ac.uk}{\protect\nolinkurl{T.J.Kelly@bham.ac.uk}}. Partially supported by the EPSRC, grant no. EP/N019504/1}
}
\begin{document}

\maketitle

\begin{abstract}
Motivated both by recently introduced forms of list colouring and by earlier work on independent transversals subject to a local sparsity condition, we use the semi-random method to prove the following result.

For any function $\mu$ satisfying $\mu(d)=o(d)$ as $d\to\infty$, there is a function $\lambda$ satisfying $\lambda(d)=d+o(d)$ as $d\to\infty$ such that the following holds.
For any graph $H$ and any partition of its vertices into parts of size at least $\lambda$ such that
(a) for each part the average over its vertices of degree to other parts is at most $d$, and (b) the maximum degree from a vertex to some other part is at most $\mu$, there is guaranteed to be a transversal of the parts that forms an independent set of $H$.

This is a common strengthening of two results of Loh and Sudakov (2007) and Molloy and Thron (2012), each of which in turn implies an earlier result of Reed and Sudakov (2002).

\smallskip
{\bf Keywords}: list colouring, independent transversals, correspondence colouring, conflict choosability.
\end{abstract}

\section{Introduction}\label{sec:intro}

Let $H$ be a graph with vertex partition $W_1, \dots, W_m$.  An \textit{independent transversal of $H$ with respect to $\{W_i\}_{i=1}^m$} is a collection $\{w_i\}_{i=1}^m$ of independent vertices in $H$ such that $w_i \in W_i$ for each $i \in \{1, \dots, m\}$.
Writing $\Delta(H)$ for the maximum degree of $H$, the following classic combinatorial question is essentially due to Erd\H{o}s (see~\cite{BES75}).
\begin{enumerate}
\renewcommand{\theenumi}{\Alph{enumi}}
\renewcommand{\labelenumi}{(\theenumi)}
\item\label{question} What is the least $\Lambda=\Lambda(d)$ such that, for every $H$ and $W_1, \dots, W_m$ as above satisfying moreover that $\Delta(H) \le d$ and $|W_i| \ge \Lambda$ for every $i \in \{1, \dots, m\}$, there is an independent transversal of $H$ with respect to $\{W_i\}_{i=1}^m$?
\end{enumerate}
Independently, Alon~\cite{Alo88} and Fellows~\cite{Fel90} showed that $\Lambda$ is linear in $d$, and later, in an acclaimed work, Haxell (see~\cite{Hax95,Hax01}) used topological methods to prove that $\Lambda(d) \le 2d$. In fact, $\Lambda(d)=2d$ for every $d$ as certified by an elementary construction due to Szab\'o and Tardos~\cite{SzTa06}.

Now let $G$ be a loopless multigraph, 
and let $L: V(G) \to 2^{V(H)}$ define a vertex partition of $V(H)$, i.e.~$\{L(v)\}_{v\in V(G)}$ defines a collection of disjoint subsets of $V(H)$ whose union comprises $V(H)$.
An {\em independent transversal of $H$ with respect to $L$} is
an independent transversal of $H$ with respect to $\{L(v)\}_{v \in V(G)}$.
We may assume without loss of generality that $H$ is a {\em cover graph for $G$ via $L$}: if $vv'\notin E(G)$ then the bipartite subgraph of $H$ induced between $L(v)$ and $L(v')$ is empty.
Viewed in this way, the independent transversals in $H$ may be related to vertex-colourings of $G$, as we now discuss.  

Any mapping $L: V(G)\to 2^{{\mathbb Z}^+}$ is called a {\em list-assignment} of $G$; a colouring $\phi$ of $V(G)$ is called an {\em $L$-colouring} if $\phi(v)\in L(v)$ for any $v\in V(G)$. 
The problem of finding proper $L$-colourings for various natural choices of $G$ is another famous combinatorial problem known as {\em list colouring}~\cite{ERT80,Viz76}. From $G$ and $L$ as above, we may produce a cover graph $H_\ell = H_\ell(G, L)$ for $G$ as follows. For every $v\in V(G)$, let $L_\ell(v)=\{(v, c)\}_{c\in L(v)}$. Let $V(H_\ell) = \cup_{v\in V(G)} L_\ell(v)$ and define $E(H_\ell)$ by letting $(v, c)(v', c') \in E(H_\ell)$ if and only if $vv'= e$ for some $e\in E(G)$ and $c=c'\in L(v)\cap L(v')$. Then independent transversals of $H_\ell$ with respect to $L_\ell$ are in one-to-one correspondence with proper $L$-colourings of $G$.

Question~\ref{question} with respect to $H_\ell$ was asked by Reed~\cite{Ree99}.  That is, what is the least $\Lambda_\ell = \Lambda_\ell(d)$ such that if a graph $G$ has a list-assignment $L$ satisfying $\Delta(H_\ell(G, L)) \leq d$ and $|L(v)| \geq \Lambda_\ell$ for every $v\in V(G)$, then $H_\ell$ has an independent transversal with respect to $L_\ell$?  Reed conjectured that $\Lambda_\ell(d) = d+1$. Reed and Sudakov~\cite{ReSu02} proved that $\Lambda_\ell(d) = d+o(d)$ as $d\to\infty$; however, Bohman and Holzman~\cite{BoHo02} disproved Reed's conjecture by exhibiting a construction certifying $\Lambda_\ell(d) \ge d+2$.

For $H$ being a cover graph for $G$ via $L$, we need the notion of \textit{maximum colour multiplicity $\mu_L(H)$ of $H$ with respect to $L$}, which is given by
\begin{equation*}
  \mu_L(H) \coloneqq \max_{vv' \in E(G), c \in L(v)} |N_H(c) \cap L(v')|.
\end{equation*}
If $G$ is a graph with list-assignment $L$, then $\mu_{L_\ell}(H_\ell(G, L)) \leq 1$.  Note that it makes no difference to $H_\ell$ whether $G$ is a multigraph or the underlying simple graph.
In 2005, Aharoni and Holzman (see~\cite{LoSu07}) asked Question~\ref{question} in the special case when $H$ is a cover graph for $G$ via $L$ satisfying $\mu_L(H) \leq 1$. 
In particular, what is the smallest $\Lambda_1=\Lambda_1(d)$  such that, if $H$ is a cover graph for $G$ via $L$ satisfying moreover that $\Delta(H) \le d$, $\mu_L(H)=1$, and $|L(v)| \geq \Lambda_1$ for every $v\in V(G)$, then $H$ has an independent transversal with respect to $L$?
Loh and Sudakov~\cite{LoSu07} resolved this problem asymptotically by showing that $\Lambda_1(d)=d+o(d)$ as $d\to\infty$. Furthermore they proved the same result under the milder assumption that $\mu_L(H) = o(d)$ as $d\to\infty$.  Since $\Lambda_\ell(d) \leq \Lambda_1(d)$ always, this also generalizes the aforementioned result of Reed and Sudakov.

This question can also be expressed in the framework of {\em correspondence colouring}~\cite{DvPo17} (also known as {\em DP-colouring}), a more general form of the list colouring problem that has recently captivated the graph colouring community.
A \textit{correspondence-assignment} for $G$ is a pair $(L, M)$ where $L$ is a list-assignment for $G$ and $M = \{M_e\}_{e \in E(G)}$ where $M_{e}$ is a matching between $\{v\}\times L(v)$ and $\{v'\}\times L(v')$ for each edge $e = vv'$.   An {\em $(L, M)$-colouring} of $G$ is an $L$-colouring $\phi$ of $G$ such that every edge $e = vv' \in E(G)$ satisfies $(v, \phi(v))(v', \phi(v')) \notin M_e$.  Note that an $(L, M)$-colouring is not necessarily a proper colouring of $G$.  Given a correspondence-assignment $(L, M)$, we may produce a cover graph $H_{DP} = H_{DP}(G, (L, M))$ for $G$ as follows.  For every $v\in V(G)$, let $L_{DP}(v) = \{(v, c)\}_{c\in L(v)}$, and let $V(H_{DP}) = \cup_{v\in V(G)} L_{DP}(v)$.  Define $E(H_{DP})$ by letting $(v, c)(v', c') \in E(H_{DP})$ if and only if $vv'=e$ for some $e \in E(G)$ such that $(v, c)(v', c') \in M_e$.  Then independent transversals of $H_{DP}$ with respect to $L_{DP}$ are in one-to-one correspondence with $(L, M)$-colourings of $G$.  Morover, if $G$ is a simple graph, then $\mu_{L_{DP}}(H_{DP}) \leq 1$, and whenever $H'$ is a cover graph for a simple graph $G'$ via $L'$ satisfying $\mu_{L'}(H') \leq 1$, there is a correspondence-assignment $(L, M)$ for $G'$ so that $H_{DP}(G', (L, M)) \cong H'$.  Thus, asking Question~\ref{question} with respect to $H_{DP}$ for simple $G$ is equivalent to asking what is $\Lambda_1$.

Note that in the special case for which the matchings $M_e$ ``recognise'' the colours, i.e.~$M_e = \{(v,i)(v',i)\}_{i\in L(v)\cap L(v')}$ if $e=vv'$ and $e\in E(G)$, then $H_{DP}$ is equivalent to $H_\ell$.
Note also that $\mu_{L_{DP}}(H_{DP})$ is at most $\mu(G)$, the maximum multiplicity of an edge in $G$.

\subsection*{Bounded average colour degrees}

In this work we consider Question~\ref{question} and the above narrative in a further strengthened form. For $H$ being a cover graph for $G$ via $L$, let us define the {\em maximum average colour degree $\avgdeg_L(H)$ of $H$ with respect to $L$} by
\[
\avgdeg_L(H) := \max_{v\in V(G)}\frac{1}{|L(v)|}\sum_{w\in L(v)} \deg_H (w).
\]
We remark that occasionally we will drop the subscripts in $\mu_L(H)$ and $\avgdeg_L(H)$ when the context is clear.
Motivated by a graph colouring problem, the following natural variation upon Question~\ref{question} was implicitly asked recently (in the alternative formulation of \textit{single-conflict chromatic number}, which we discuss later) by Dvo\v{r}\'ak, Esperet, Ozeki and the first author~\cite{DEKO18+}.

\begin{enumerate}
\renewcommand{\theenumi}{\Alph{enumi}}
\renewcommand{\labelenumi}{(\theenumi)}
\setcounter{enumi}{1}
\item\label{avgquestion} What is the least $\Lambda'=\Lambda'(d)$ such that, for every $H$ and $L$ as above satisfying moreover that $\avgdeg_L(H) \le d$ and $|L(v)| \ge \Lambda'$ for every $v\in V(G)$, there is an independent transversal of $H$ with respect to $L$?
\end{enumerate}
Note that since $\avgdeg_L(H) \le \Delta(H)$ always, we have $\Lambda'(d) \ge \Lambda(d) = 2d$. It was already observed that $\Lambda'(d) \le 4d$~\cite[Proposition~5]{DEKO18+}, and for convenience we restate this in Proposition~\ref{prop:upper4} below.

We can also ask Question~\ref{avgquestion} in the context of list colouring and correspondence colouring for simple $G$ as before, 
and our main result resolves both of these questions in a stronger form. 
More fully, our main result is an asymptotically optimal bound in Question~\ref{avgquestion} in the special case that $\mu_L(H)$ is a vanishingly small fraction of $\avgdeg_L(H)$.

\begin{theorem}\label{thm:main}
For any function $\mu$ satisfying $\mu(d)=o(d)$ as $d\to\infty$, there is a function $\lambda$ satisfying $\lambda(d) = d + o(d)$ as $d\to\infty$ such that the following holds.
For every $H$ being a cover graph for $G$ via $L$ satisfying
\begin{itemize}
\item $\avgdeg_L(H) \le d$,
\item $|L(v)| \ge \lambda$ for all $v\in V(G)$, and
\item $\mu_L(H) \le \mu$,
\end{itemize}
there is an independent transversal of $H$ with respect to $L$.
\end{theorem}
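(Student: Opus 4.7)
The plan is to prove Theorem~\ref{thm:main} by the semi-random (nibble) method: build a partial independent transversal $\phi$ in $O((\log d)/\eps)$ rounds, and after each round maintain for every uncoloured $v\in V(G)$ an available list $\availableList(v)\subseteq L(v)$ of colours no $H$-neighbour of which has been selected, together with the pair of invariants
\[
|\availableList(v)|\ge \ell_i\qquad\text{and}\qquad \avgdeg_{\availableList}(H^\phi)\le d_i,
\]
where $H^\phi$ denotes the subgraph of $H$ induced by the uncoloured colour-vertices, and $(\ell_0,d_0)=(\lambda,d)$. The sequences $\ell_i,d_i$ will be chosen so that the ratio $d_i/\ell_i$ contracts by a factor $1-\Theta(\eps)$ per iteration for a suitably small $\eps=\eps(d)$.

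Each round is a standard nibble step: every uncoloured $v$ independently activates each $c\in\availableList(v)$ with probability $p_i=\eps/d_i$, and $v$ is added to the transversal with colour $c$ if $c$ is the unique activation in $\availableList(v)$ and no conflicting colour is activated in any neighbouring list. A colour $c\in\availableList(v)$ survives the round with probability $\approx\exp(-p_i\deg_{H^\phi}(c))$, whence the expected new list size is $\approx\ell_i e^{-\eps}$. An analogous second-moment-style computation, using $\mu_L(H)\le\mu$ to bound by $\mu$ the number of colours a single activated colour can eliminate from any neighbouring list, shows that the expected total remaining degree around $v$ decays at a comparable rate, yielding the required contraction of $d_{i+1}/\ell_{i+1}$ once $\mu=o(d)$ is exploited.

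Both random variables are essentially Lipschitz in the activations with constant $O(\mu)$, so Talagrand's inequality gives deviations $o(\ell_i)$ and $o(d_i)$ with probability $1-\exp(-\omega(\log d))$. Since the bad event at $v$ depends only on activations in a bounded $H$-neighbourhood of $v$, the Lov\'asz Local Lemma combines these estimates into positive probability of simultaneous success everywhere and completes one iteration. After $O((\log d)/\eps)$ rounds $d_i/\ell_i$ drops below $1/4$, at which stage Proposition~\ref{prop:upper4} (the bound $\Lambda'(d)\le 4d$) extends $\phi$ to a full independent transversal of $H$ with respect to $L$.

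The main obstacle is maintaining the average colour degree invariant. A small set of colours in $\availableList(v)$ may have degree far exceeding $d_i$, dominating the average yet behaving atypically during a nibble step: they are removed with probability close to $1$, so their contribution to the updated average is dictated by rare events and threatens heavy-tailed fluctuations which defeat Talagrand out of the box. The assumption $\mu_L(H)=o(d)$ is precisely what is needed to tame this, since it caps the per-activation Lipschitz constant of the relevant quantities by $\mu$. The delicate heart of the argument will consist of splitting $\availableList(v)$ into a high- and a low-degree regime, applying concentration with the appropriate Lipschitz parameters in each regime, and verifying that both invariants are jointly preserved through every nibble step.
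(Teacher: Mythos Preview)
Your outline captures the second phase of the paper's argument well---the iterated nibble that drives the ratio $|\availableList(v)|/\avgdeg$ up to $4$ and the use of Proposition~\ref{prop:upper4} as a finishing blow---and you correctly identify the high/low-degree split on $\availableList(v)$ as the delicate point in maintaining the average-degree invariant. However, there is a genuine gap: you are missing an entire preprocessing phase, and without it the concentration step you sketch does not go through.

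You assert that the relevant random variables are Lipschitz with constant $O(\mu)$ and that Talagrand's inequality then gives deviations $o(\ell_i)$ and $o(d_i)$. But the hypothesis is only $\mu=o(d)$; take for instance $\mu=d/\log\log d$. The Talagrand bound (Theorem~\ref{BJ-tala} or~\ref{MR-tala}) has the form $\exp(-t^2/(c\,\change^2 s))$ with certificate size $s$ of order $d$ (or larger), so with $\change=\Theta(\mu)$ and $t=o(d)$ the exponent is $o(1)$ and no concentration follows. The paper deals with this by first running a separate ``random halving'' phase (Theorem~\ref{list-reduction-lemma}, adapted from Loh--Sudakov) that passes to an induced cover graph $H'$ with new parameter $d'$ satisfying $\mu_{L'}(H')\le d'^{1/5}$ \emph{and} $\Delta(H')\le d'\log^{1/2}d'$; only then is the nibble (Theorem~\ref{semirandom-partial-colouring-thm}) applied, with $\change=d^{1/4}$ small enough for Talagrand to bite. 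The bound on $\Delta(H')$ is equally essential: without it the certificate sizes in Talagrand and the dependency degree in the Local Lemma (which scales with $\Delta(G)\le \Lambda\,\Delta(H)$) are both uncontrolled, so your appeal to ``a bounded $H$-neighbourhood of $v$'' is not justified by the hypotheses as stated.
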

\noindent
Note that since $\avgdeg_L(H) \le \Delta(H)$ always, Theorem~\ref{thm:main} is stronger than the theorem of Loh and Sudakov, and is thus also stronger than the result of Reed and Sudakov. It also implies a more recent result of Molloy and Thron~\cite{MoTh12} on adaptable choosability (which itself also implies the result of Reed and Sudakov), which we now explain.

The ``single-conflict'' version of correspondence colouring a multigraph $G$ concerns correspondence-assignments $(L, M)$ for $G$ where the matchings $M_e$ for $e\in E(G)$ have size 1.  Equivalently, it concerns the existence of independent transversals in a graph $H$ where $H$ is a cover graph for $G$ via $L$ such that for every $vv'\in E(G)$, we have $|E_H(L(v), L(v'))| = \mu_G(vv')$, where $\mu_G(vv')$ is the multiplicity of the edge $vv'$ in $G$.  More precisely, the \textit{single-conflict chromatic number} of a multigraph $G$ is the smallest $k$ such that the following holds: for every correspondence-assignment $(L, M)$ satisfying $|L(v)| \geq k$ for every $v\in V(G)$ and $|M_e| = 1$ for every $e \in E(G)$, there is an $(L, M)$-colouring of $G$.  Importantly, in this case every $v\in V(G)$ satisfies $|L(v)|\cdot \avgdeg_L(H) = \Delta(G)$.  Thus, we could equivalently ask Question~\ref{avgquestion} for such graphs $H$ and replace $\avgdeg_L(H)$ with $\Delta(G)/\Lambda'$, and this is essentially the same as asking for the best bound on the single-conflict chromatic number of multigraphs of bounded maximum degree.  If we restrict the question further to the case where $G$ has a list-assignment $L'$ such that $H$ is isomorphic to a subgraph of $ H_\ell(G, L')$, then similarly we are asking for the best bound on the \textit{adaptable choosability} of multigraphs of bounded maximum degree.  The adaptable choosability of a multigraph $G$ is the smallest $k$ such that the following holds: for every correspondence-assignment $(L, M)$ satisfying $|L(v)| \geq k$ for every $v \in V(G)$ and $|M_e| = 1$ for every $e\in E(G)$ and moreover $(u, c)(v, c') \in M_{uv}$ only if $c = c'$, there is an $(L, M)$-colouring of $G$.  In this way, Molloy and Thron's bound on the adaptable choosability implies that if $G$ is a graph with list-assignment $L$ and $H \subseteq H_\ell(G, L)$ satisfying $\avgdeg_{L_\ell}(H) \leq d$ and $|L(v)| \geq d + o(d)$, 
then $H$ has an independent transversal with respect to $L_\ell$.  In this case, we still have $\mu_{L_\ell}(H) \leq 1$, so Theorem~\ref{thm:main} generalizes this result by allowing $H \subseteq H_{DP}(G, (L, M))$ for a correspondence-assignment $(L, M)$ satisfying $\mu_{L_{DP}}(H) = o(d)$.

As in~\cite{ReSu02}, the proof of Theorem~\ref{thm:main} proceeds through a semi-random procedure. We have additionally incorporated ideas from both~\cite{LoSu07} and~\cite{MoTh12} as well as modern concentration tools.

\subsection*{Structure of the paper}

In the next section, we present the probabilistic tools we require for the proof. We give an outline of the two-phase procedure in Section~\ref{sec:finish}. The bulk of the paper is devoted to the proof of the second, main phase of the procedure in Section~\ref{sec:semirandom}. At the end of the paper, we discuss a handful of interesting problems for further study.

\section{Probabilistic tools}\label{sec:tools}

We need several probabilistic tools.  The first such is the Lov\' asz Local Lemma.

\begin{slll}\label{local lemma}
Let $p\in[0,1)$ and $\mathcal A$ a finite set of events such that for every $A\in\mathcal A$,
\begin{enumerate}
	\item $\Prob{A} \leq p$, and
	\item $A$ is mutually independent of a set of all but at most $d$ other events in $\mathcal A$.
\end{enumerate}
If $4pd\leq 1$, then the probability that none of the events in $\mathcal A$ occur is strictly positive.
\end{slll}
\noindent
When we apply this, each bad event in $\mathcal A$ is an event in which a certain random variable deviates significantly from its expectation.

The remainder of this section is devoted to providing general sufficient conditions for a random variable to be concentrated around its expectation with high probability.  The first and most basic of these is the Chernoff Bound.

\begin{chernoff}\label{chernoff}
  If $X = \sum_{i=1}^nX_i$ is a sum of bounded independent random variables $a_i\leq X_i\leq b_i$, then
  \begin{equation*}
    \Prob{|X - \Expect{X}| \geq t} \leq 2\exp\left(-\frac{t^2}{\sum_{i=1}^n(b_i - a_i)^2}\right).
  \end{equation*}
  In particular, when $X_i$ are indicator variables (i.e. $a_i = 0$ and $b_i = 1$), we have
  \begin{equation*}
    \Prob{|X - \Expect{X}| \geq t} \leq 2\exp\left(-\frac{t^2}{n}\right).
  \end{equation*}
\end{chernoff}

The Chernoff Bound provides very tight concentration, but is limited in its applicability.  A much more flexible concentration inequality is Talagrand's Inequality~\cite{T95}.  It can be cumbersome though, so many researchers have proved derivations of it more suitable for combinatorial applications. 
We use the following version from~\cite{MR14}, see~\cite[Remark~1]{KePo20}.
\begin{theorem}[Molloy and Reed~\cite{MR14}]\label{MR-tala}
  Let $X$ be a non-negative random variable determined by the independent trials $T_1, \dots, T_n$.  Suppose that for every set of possible outcomes of the trials, we have that
  \begin{itemize}
  \item changing the outcome of any one trial can affect $X$ by at most $\change$; and
  \item for each $s > 0$, if $X \geq s$, then there is a set of at most $rs$ trials whose outcomes certify that $X \geq s$.
  \end{itemize}
  Then for any $t\geq 0$ where $t/2 \geq 20\change + \sqrt{r\Expect{X}} + 64\change^2r$, we have
  \begin{equation*}
    \Prob{|X - \Expect{X}| > t} \leq 4\exp\left(\frac{-t^2}{32\change^2 r(\Expect{X} + t)}\right).
  \end{equation*}
\end{theorem}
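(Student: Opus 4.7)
The plan is to derive this bound from Talagrand's classical concentration inequality~\cite{T95}, in the combinatorial form used by Molloy and Reed: if $X$ is determined by independent trials, is $\change$-Lipschitz, and is $f$-certifiable with $f(s)=rs$, then for every $b\geq 0$ and $u\geq 0$,
\[
\Prob{X\leq b-u\change\sqrt{rb}}\cdot\Prob{X\geq b}\leq \exp(-u^2/4).
\]
Let $m$ denote the median of $X$. Taking $b=m$ and using $\Prob{X\geq m}\geq 1/2$ immediately yields $\Prob{X\leq m-u\change\sqrt{rm}}\leq 2\exp(-u^2/4)$. For the upper tail, I choose $b^{*}$ to be the smallest root of $b^{*}-u\change\sqrt{rb^{*}}=m$; a short quadratic-in-$\sqrt{b^{*}}$ calculation gives $b^{*}=m+O(u\change\sqrt{rm}+u^2\change^2 r)$, and applying the inequality at $b=b^{*}$ then produces $\Prob{X\geq b^{*}}\leq 2\exp(-u^2/4)$. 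This establishes two-sided concentration of $X$ around its median.

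Next I would bridge the median and the expectation. Integrating the tail bounds against $u$ (or applying them with constant $u$) gives $|m-\Expect{X}|=O(\change\sqrt{r\,\Expect{X}}+\change^2 r)$. This is exactly what the hypothesis $t/2\geq 20\change+\sqrt{r\,\Expect{X}}+64\change^2 r$ is tailored to absorb: under it, a deviation of $t$ from $\Expect{X}$ forces a deviation of at least $t/2$ from $m$, after accounting for both the median--expectation gap and the lower-order $u^2\change^2 r$ correction in $b^{*}-m$. I would then optimize the upper-tail level by choosing $b^{*}=\Expect{X}+t$ and solving for $u$, which forces $u\change\sqrt{r(\Expect{X}+t)}=\Theta(t)$ and hence $u=\Theta\bigl(t/(\change\sqrt{r(\Expect{X}+t)})\bigr)$. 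The resulting factor is $\exp(-u^2/4)=\exp\bigl(-\Omega(t^2/(\change^2 r(\Expect{X}+t)))\bigr)$, and combining the upper and lower tail estimates yields the claimed bound once constants are tracked.

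The main obstacle is the careful bookkeeping behind two subtleties. First, the denominator contains $\Expect{X}+t$ rather than merely $\Expect{X}$, so Talagrand's inequality has to be invoked at level $b^{*}=\Expect{X}+t$ (whose certificate size scales as $\sqrt{r(\Expect{X}+t)}$) rather than at $b=\Expect{X}$; this is the source of the $t$ inside the exponent and is what keeps the bound nontrivial when $t\gg \Expect{X}$. Second, the specific constants $20$ and $64$ in the hypothesis are chosen precisely so that the quadratic-in-$\change$ certificate correction, together with the median--expectation gap, are absorbed into $t/2$ without further loss; matching these constants cleanly against the final constant $32$ in the exponent is the most error-prone part of the argument.
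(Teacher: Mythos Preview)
The paper does not prove this theorem at all: it is quoted as a tool from Molloy and Reed~\cite{MR14} (with a pointer to~\cite[Remark~1]{KePo20}) and used as a black box in Section~\ref{sec:semirandom}. So there is no ``paper's own proof'' to compare against.

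That said, your outline is essentially the standard derivation that one finds in those references. The route via Talagrand's product-space inequality in certifiable/Lipschitz form, first establishing two-sided concentration around the median by applying the inequality at level $b=m$ for the lower tail and at the quadratic root $b^{*}$ of $b^{*}-u\change\sqrt{rb^{*}}=m$ for the upper tail, and then passing from median to expectation by integrating the tails, is exactly how Molloy and Reed obtain this form. Your identification of the two delicate points---that the upper-tail application must be at level $\Expect{X}+t$ (so the certificate size involves $r(\Expect{X}+t)$, which is why $\Expect{X}+t$ appears in the denominator of the exponent), and that the constants $20$ and $64$ in the hypothesis are there to absorb the median--mean gap and the $u^{2}\change^{2}r$ correction---is accurate. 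If you want to turn this into a full proof rather than a sketch, the only real work remaining is the bookkeeping you flagged: tracking the constants through the quadratic solution for $b^{*}$ and the tail integration so that everything lands on $32$ in the exponent and a leading factor of $4$.
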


We also need a more robust version due to Bruhn and Joos~\cite[Theorem~7.5]{BrJo18}, which applies as long as almost all outcomes satisfy the conditions of Theorem~\ref{MR-tala}, that is, it takes into account a set of exceedingly unlikely exceptional outcomes.

We say a random variable has \textit{upward $(s, \change)$-certificates} with respect to a set of exceptional outcomes $\sampleSpace^*$ if for every $\omega\in\sampleSpace\setminus\sampleSpace^*$ and every $t > 0$, there exists an index set $I$ of size at most $s$ so that $X(\omega') \geq X(\omega) - t$ for any $\omega' \in \sampleSpace\setminus \sampleSpace^*$ for which the restrictions $\omega|_I$ and $\omega'|_I$ differ in at most $t/\change$ coordinates.
\begin{theorem}[Bruhn and Joos~\cite{BrJo18}]\label{BJ-tala}
  Let $((\sampleSpace_i, \sigmaAlg_i, \mathbb P_i))$ be probability spaces, let $(\sampleSpace, \sigmaAlg, \mathbb P)$ be their product space, and let $\sampleSpace^*\subseteq\sampleSpace$ be a set of exceptional outcomes.  Let $X : \sampleSpace \rightarrow \mathbb R$ be a non-negative random variable, and let $M = \max\{\sup X, 1\}$, and let $\change \geq 1$.  If $\Prob{\sampleSpace^*} \leq M^{-2}$ and $X$ has upward $(s, \change)$-certificates, then for $t > 50 \change\sqrt s$,
  \begin{equation*}
    \Prob{|X - \Expect{X}| \geq t} \leq 4\exp\left(-\frac{t^2}{16\change^2s}\right) + 4\Prob{\sampleSpace^*}.
  \end{equation*}
\end{theorem}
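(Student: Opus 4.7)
The plan is to reduce Theorem~\ref{BJ-tala} to Theorem~\ref{MR-tala} (or, at a deeper level, to the Talagrand convex-distance inequality underpinning it) by the standard device of replacing $X$ with a surrogate $\tilde X$ that satisfies clean certificates on all of $\sampleSpace$, and then paying for the discrepancy between $X$ and $\tilde X$ using the hypothesis $\Prob{\sampleSpace^*} \le M^{-2}$.

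The first step, constructing $\tilde X$, is the crux of the argument. The naive truncation $\tilde X := X \cdot \mathbf 1_{\sampleSpace \setminus \sampleSpace^*}$ is inadequate because changing a single trial can now move $\tilde X$ by as much as $M$, wrecking the $\change$-Lipschitz property. A better choice is the Hamming--Lipschitz envelope
\[
\tilde X(\omega) \;:=\; \sup\bigl\{\, X(\omega^*) - \change \cdot d_H(\omega, \omega^*) \;:\; \omega^* \in \sampleSpace \setminus \sampleSpace^* \,\bigr\},
\]
where $d_H$ is Hamming distance. By the original upward certificate property one can show $\tilde X$ agrees with $X$ on $\sampleSpace \setminus \sampleSpace^*$ (up to a small error one can absorb), it is globally $\change$-Lipschitz in $d_H$, and it inherits an upward $(s', \change)$-certificate condition on all of $\sampleSpace$ with $s' = O(s)$: for a generic $\omega$, combine the size-$\le s$ certificate of a near-optimal non-exceptional witness $\omega^*$ with the coordinates on which $\omega$ and $\omega^*$ differ, using the Lipschitz slack to control how much the certificate inflates.

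The second step is to apply Theorem~\ref{MR-tala} to $\tilde X$ (with empty exceptional set) to obtain
\[
\Prob{|\tilde X - \Expect{\tilde X}| \ge t'} \;\le\; 4\exp\!\left(-\frac{t'^2}{16\change^2 s'}\right) \qquad \text{for } t' > 50\change\sqrt{s'}.
\]
The third step is transfer and bookkeeping: since $X$ and $\tilde X$ agree off $\sampleSpace^*$, $|X|, |\tilde X| \le M$, and $\Prob{\sampleSpace^*} \le M^{-2}$, we get $|\Expect X - \Expect{\tilde X}| \le M \Prob{\sampleSpace^*} \le M^{-1} \le 1$ and
\[
\Prob{|X - \Expect X| \ge t} \;\le\; \Prob{|\tilde X - \Expect{\tilde X}| \ge t - 1} + \Prob{\sampleSpace^*}.
\]
The hypothesis $t > 50\change\sqrt s$ absorbs the $-1$ into the leading constants, and the factor $4$ in front of $\Prob{\sampleSpace^*}$ in the target inequality accommodates these final adjustments.

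The main obstacle is precisely the verification in step one that the certificate size $s'$ for $\tilde X$ does not exceed $s$ by more than a multiplicative constant. The concern is that when $\omega$ is far from $\sampleSpace \setminus \sampleSpace^*$, the witness $\omega^*$ achieving the envelope may require many flipped coordinates before its $X$-certificate applies to $\tilde X$ at $\omega$. The saving observation is that whenever $\omega$ is far from $\sampleSpace \setminus \sampleSpace^*$, the envelope value $\tilde X(\omega)$ is itself depressed by $\change \cdot d_H(\omega, \omega^*)$, so the certificate inequality $\tilde X(\omega') \ge \tilde X(\omega) - t$ enjoys a built-in slack of the same order. Making this quantitative and handling the edge cases (e.g.\ when $\sampleSpace \setminus \sampleSpace^*$ is not ``full-measure enough'' in some direction) is the technical heart of the Bruhn--Joos argument; once it is in place, the transfer back to $X$ is routine arithmetic.
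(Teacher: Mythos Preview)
The paper does not prove this theorem at all: it is quoted verbatim as a black-box tool from Bruhn and Joos~\cite[Theorem~7.5]{BrJo18}, with no accompanying argument. So there is no ``paper's own proof'' to compare your sketch against.

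That said, your proposal is not a proof but an outline with an explicitly flagged gap. You yourself identify the crux---showing that the Lipschitz envelope $\tilde X$ inherits upward $(s',\change)$-certificates with $s' = O(s)$---and then defer it (``Making this quantitative\ldots is the technical heart of the Bruhn--Joos argument''). Everything in your second and third steps is routine once step one is in place, but step one is exactly where the work lies, and you have not done it. In particular, the ``saving observation'' you describe (that distance from $\sampleSpace\setminus\sampleSpace^*$ depresses $\tilde X$ and thus buys slack) is the right intuition, but turning it into a certificate of bounded size requires a careful argument that you have not supplied. As written, this is a plausible roadmap, not a proof.
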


\section{A two-phase semi-random procedure}\label{sec:finish}

Dvo{\v{r}}{\'a}k, Esperet, Ozeki and the first author observed~\cite[Proposition~5]{DEKO18+} using the Lov\' asz Local Lemma that every multigraph of maximum degree $\Delta$ has {\em single-conflict chromatic number} at most $\lceil \sqrt{e(2\Delta - 1)}\rceil$. This implies that
for every $H$ being a cover graph for $G$ via $L$ satisfying
 $\avgdeg_L(H) \leq d$ and $|L(v)| \geq 2ed$, there is an independent transversal of $H$ with respect to $L$.  They remarked that by using the Local Cut Lemma~\cite{Ber17} instead of the Lov\' asz Local Lemma, one can improve the bound $\lceil\sqrt{e(2\Delta - 1)}\rceil$ to $2\sqrt\Delta$. This translates as follows.

\begin{proposition}\label{prop:upper4}
If $H$ is a cover graph for $G$ via $L$ satisfying
\begin{itemize}
\item $\avgdeg_L(H) \le d$ and
\item $|L(v)| \ge 4d$ for all $v\in V(G)$,
\end{itemize}
then
there is an independent transversal of $H$ with respect to $L$.
\end{proposition}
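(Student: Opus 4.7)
The plan is to reduce Proposition~\ref{prop:upper4} to the single-conflict chromatic number bound of $2\sqrt{\Delta(G)}$ mentioned earlier in the paper (which itself is obtained via Bernshteyn's Local Cut Lemma~\cite{Ber17}).

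\textbf{Step 1 (trimming).} For each $v$, I would replace $L(v)$ by a sublist $L'(v) \subseteq L(v)$ of size exactly $\lceil 4d \rceil$, obtained by keeping the $\lceil 4d \rceil$ vertices of smallest $H$-degree. Let $H' = H\bigl[\bigcup_v L'(v)\bigr]$. A Markov-type averaging bounds the sum of the smallest $\lceil 4d\rceil$ values of $\deg_H$ on $L(v)$ by $\frac{\lceil 4d\rceil}{|L(v)|}\cdot d|L(v)| \le \lceil 4d\rceil\cdot d$, and degrees only drop when passing to $H'$, so $\avgdeg_{L'}(H') \le d$.

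\textbf{Step 2 (reduction to single-conflict).} I would encode finding an independent transversal of $H'$ with respect to $L'$ as a single-conflict correspondence-colouring problem. Namely, let $G'$ be the multigraph on $V(G)$ with one edge $vv'$ for every edge of $H'$ between $L'(v)$ and $L'(v')$, and equip $G'$ with the correspondence-assignment $(L', M)$ in which each $M_f$ has size one, matching the unique pair $(v,w)(v',w')$ corresponding to $f = ww'$. An $(L',M)$-colouring of $G'$ is then precisely an independent transversal of $H'$ with respect to $L'$. The degree of $v$ in $G'$ equals $\sum_{w\in L'(v)}\deg_{H'}(w)\le |L'(v)|\cdot\avgdeg_{L'}(H')\le \lceil 4d\rceil \cdot d$, so $\Delta(G') \le 4d^2$ (up to integrality). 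The single-conflict chromatic number bound then gives an $(L',M)$-colouring as soon as $|L'(v)|\ge 2\sqrt{\Delta(G')}$, i.e.\ as soon as $\lceil 4d\rceil \ge 4d$, which trivially holds.

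\textbf{Main obstacle.} The substantive content is hidden in the single-conflict bound of $2\sqrt\Delta$: a naive symmetric LLL applied to the random transversal (independent uniform $w_v\in L'(v)$, bad events $A_f = \{w_v = w, w_{v'}=w'\}$ for $f=ww'\in E(H')$, with $\Pr[A_f] = 1/(|L'(v)||L'(v')|)$ and dependency at most $d(|L'(v)|+|L'(v')|)$) only yields the weaker threshold of roughly $2e d$; squeezing the constant down to $4d$ is exactly what the Local Cut Lemma is designed to do in matching-structured LLL setups. So I would simply cite~\cite{Ber17} for that step rather than re-derive it, and the rest of the argument is the clean trimming-plus-reduction sketched above.
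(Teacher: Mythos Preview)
Your proposal is correct and is essentially the same approach the paper takes: the paper does not give an independent proof of Proposition~\ref{prop:upper4} but simply states that it is the translation of the $2\sqrt{\Delta}$ single-conflict bound (obtained via the Local Cut Lemma in~\cite{Ber17}) into the cover-graph/average-colour-degree language, which is exactly the reduction you spell out in Steps~1--2. Your trimming step makes explicit the one detail the paper leaves implicit, namely that one must equalise the list sizes (to $\lceil 4d\rceil$, keeping lowest-degree colours so that $\avgdeg$ does not increase) before invoking the single-conflict bound, since otherwise $\Delta(G')$ need not be controlled.
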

\noindent
Proposition~\ref{prop:upper4} suffices as the ``finishing blow'' in our proof of Theorem~\ref{thm:main}.  We reduce Theorem~\ref{thm:main} to Proposition~\ref{prop:upper4} using a two-phase semi-random procedure.
We note that the value of the constant $4$ is unimportant: our reduction also holds if $4$ is replaced by $4e$.

The first phase reduces the problem from one in which $\mu_L(H) = o(d)$ to one in which $\mu_L(H) \leq d^{1/5}$, and this phase is embodied by the following result.

\begin{theorem}\label{list-reduction-lemma}
  For every $d_1, \eps > 0$, there exists $\gamma_0, d_0 > 0$ such that following holds for all $\gamma < \gamma_0$ and $d > d_0$.  
For every $H$ being a cover graph for $G$ via $L$ satisfying
  \begin{itemize}
  \item $|L(v)| \geq (1 + \eps)d$ for all $v\in V(G)$,
  \item $\avgdeg_L(H) \leq d$, and
  \item $\mu_L(H) \leq \gamma d$,
  \end{itemize}
  there exists $d' \geq d_1$ and an induced subgraph $H' \subseteq H$
that is a cover graph for $G$ via $L'$ for some $L'$ satisfying
  \begin{itemize}
  \item $|L'(v)|  \geq (1 + \eps/2)d'$ for all $v\in V(G)$,
  \item $\avgdeg_{L'}(H') \leq d'$,
  \item $\mu_{L'}(H') \leq d'^{1/5}$, and
  \item $\Delta(H') \leq d'\log^{1/2} d'$.
  \end{itemize}
\end{theorem}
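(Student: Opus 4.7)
I would prove Theorem~\ref{list-reduction-lemma} via one round of independent random vertex-sampling of $V(H)$, preceded by a deterministic degree-truncation, and verified by the Lov\'asz Local Lemma on four natural bad events. Pick $\gamma_0 \le d_1^{-4/5}/2$ so that $d' := (2\gamma)^{-5/4}$ satisfies $d' \ge d_1$ and $\gamma d' \le \tfrac12 d'^{1/5}$, and set $p := d'/d$; note $p \to 0$ as $d \to \infty$ while $d'$ is fixed in terms of $\gamma$. Form $H_0 \subseteq H$ by deleting every $w \in V(H)$ with $\deg_H(w) > \tfrac12 d \log^{1/2} d'$. Markov's inequality applied to each list shows only a $2/\log^{1/2} d'$-fraction is deleted, so $|L_0(v)| \ge (1+\eps-o_\gamma(1))d$, $\avgdeg_{L_0}(H_0) \le d(1+o_\gamma(1))$, and $\Delta(H_0) \le \tfrac12 d\log^{1/2} d'$.

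\textbf{Sampling and bad events.} Keep each $w \in V(H_0)$ independently with probability $p$, and set $V(H') := \{\text{kept}\}$, $L'(v) := L_0(v) \cap V(H')$. In expectation all four conclusions hold with slack: $\Expect{|L'(v)|} \ge (1+\eps-o_\gamma(1))d'$, $\Expect{\avgdeg_{L'}(H')} \le d'(1+o_\gamma(1))$, $\Expect{\mu_{L'}(H')} \le \gamma d' \le \tfrac12 d'^{1/5}$, and $\Expect{\deg_{H'}(w)} \le \tfrac12 d'\log^{1/2} d'$ for every $w$. The bad events are: $A_v$ ($|L'(v)|$ smaller than its mean by more than a small $\eta$-factor), $B_v$ ($X_v := \sum_{c\in L'(v)}\deg_{H'}(c)$ exceeding $(1+\eta)\Expect{X_v}$), $C_{vv'c}$ ($|N_{H'}(c)\cap L'(v')| > d'^{1/5}$ for an edge $vv'\in E(G)$ and $c\in L_0(v)$), and $D_w$ ($\deg_{H'}(w) > d'\log^{1/2}d'$). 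Avoiding all and rescaling $d''$ slightly above $d'$ to absorb the $\eta$ gives the conclusion. Events $A_v, C_{vv'c}, D_w$ are sums of independent Bernoullis, so Chernoff yields probabilities $e^{-\Omega(d')}$, $e^{-\Omega(d'^{1/5})}$, and $e^{-\Omega(d'\log^{1/2}d')}$ respectively.

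\textbf{Main obstacle.} The hardest step is concentration of $X_v$. It is a degree-$2$ polynomial in $\{Y_c\}_{c \in V(H_0)}$ whose single-coordinate Lipschitz constant is $\max(\Delta(H_0), \mu_{L_0}(H_0)) = \Omega(d \log^{1/2} d')$, much larger than the scale $\Expect{X_v} = \Theta(d'^2)$ and the required deviation $\eta d'^2$. A direct application of Theorem~\ref{MR-tala} is inadequate, since its threshold $t \ge O(\change^2 r)$ would force $t \gtrsim d^2 \log d' \gg d'^2$. I would therefore apply Theorem~\ref{BJ-tala} with an exceptional set $\sampleSpace^*$ encoding failure of $D_w$ for $w \in L_0(v) \cup N_{H_0}(L_0(v))$ together with pointwise-multiplicity bounds $|N_{H_0}(w) \cap L'(v)| \le 2\gamma d'$ for $w \notin L_0(v)$; on the complement, the effective Lipschitz drops to $\change \approx d'\log^{1/2} d'$ and the certificate size $s$ can be bounded in terms of the preprocessed structure. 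The delicate point is that the resulting Talagrand tail is only exponentially small in a positive power of $d'$, which is constant in $d$, so the polynomial-in-$d$ dependency count for the LLL demands extra care---one must invoke either an asymmetric LLL exploiting the disparity in probabilities among the different event types or a refined dependency analysis based on the local neighborhood structure of $H_0$, and this interplay between the $d$-scale of the dependency graph and the bounded $d'$-scale of the target parameters is the principal technical hurdle.
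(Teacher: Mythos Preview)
You have correctly located the central difficulty, but the proposed resolution does not work, and in fact the obstruction already appears for the simpler events $A_v$, $C_{vv'c}$, $D_w$ before one ever reaches $B_v$. The problem is a scale mismatch that no variant of the Local Lemma can bridge. Your $d' = (2\gamma)^{-5/4}$ is a function of $\gamma$ alone, so for any fixed $\gamma\in(0,\gamma_0)$ it is a constant while $d$ ranges over $(d_0,\infty)$. Every tail bound you obtain --- $e^{-\Omega(d')}$ for $A_v$, $e^{-\Omega(d'^{1/5})}$ for $C_{vv'c}$, $e^{-\Omega(d'\log^{1/2}d')}$ for $D_w$ --- is therefore a fixed constant in $d$. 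Meanwhile the dependency degree is genuinely polynomial in $d$: each trial $Y_c$ with $c\in L_0(v)$ participates in $D_w$ for every $w\in N_{H_0}(c)$ (already $\Theta(d\log^{1/2}d')$ events) and in $B_{v'}$ for every $v'$ with $c\in N_{H_0}(L_0(v'))$; altogether $A_v$ shares trials with $\Omega(d^2)$ other bad events. The asymmetric Local Lemma still requires, for each $A_v$, that $\sum_{B\sim A_v} x_B = O(1)$, but this sum has $\Omega(d^2)$ terms each bounded below (up to a constant) by the corresponding event's probability, which is bounded away from zero independently of $d$; no ``refined dependency analysis'' can change the fact that these dependencies are real. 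For $B_v$ the situation is even worse: the certificate size $s$ in Theorem~\ref{BJ-tala} is the number of trials on which $X_v$ depends, namely $|L_0(v)\cup N_{H_0}(L_0(v))| = \Theta(d^2)$, so with $\change = O(d'\log^{1/2}d')$ and $t=\eta d'^2$ the exponent $t^2/(16\change^2 s)$ is $O(d'^2/d^2)\to 0$, and the concentration inequality itself gives nothing.

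The paper avoids this mismatch by \emph{iterated halving} rather than one-shot sampling. One proves a halving lemma: with lists of size $2s$, enumerate each $L(v)$ as $c_1,\dots,c_{2s}$ sorted by $\deg_H$, pair them as $(c_{2i-1},c_{2i})$, and keep one colour from each pair uniformly at random. The new list sizes are then exactly $s$ with zero variance, and the degree-sorting ensures $\sum_{c\in L'(v)}\deg_H(c)\le sd+d\log d$ deterministically; combined with $\deg_{H'}(c)\le \deg_H(c)/2+d^{4/7}$ (Chernoff plus Local Lemma) this yields $\avgdeg_{L'}(H')\le d/2+d^{3/5}$, and similarly $\Delta\le (d\log d)/2+d^{3/5}$ and $\mu\le \mu/2+\mu^{3/5}$. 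Crucially, at step $t$ both the tail bounds and the dependency degree are governed by the \emph{current} $d_t\approx d/2^t$, so the Local Lemma applies at every step. After $j\approx\log_2(\gamma^{6/5}d)$ halvings one reaches $d_j\approx\gamma^{-6/5}$ with $\mu_j=O(d_j^{1/6})\le d_j^{1/5}$ and the ratio $|L'(v)|/d_j$ preserved up to lower order; a final deterministic trim of the $O(1/\log^{1/2}d_j)$-fraction of high-degree colours secures $\Delta(H')\le d'\log^{1/2}d'$. The iterative structure is precisely what keeps the concentration scale and the dependency scale aligned throughout.
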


Without the requirement that $\Delta(H') \leq d'\log^{1/2}d'$, the proof of Theorem~\ref{list-reduction-lemma} can be obtained from the proof of~\cite[Theorem~3.1]{LoSu07} with the following substitutions, letting $V(G) = \{v_1, \dots, v_r\}$:
\begin{itemize}
\item $\Delta \rightarrow d$,
\item $G\rightarrow H$,
\item $V_1, \dots, V_r \rightarrow L(v_1), \dots, L(v_r)$, and
\item ``local degree'' $\rightarrow \mu_L(H)$.
\end{itemize}
Effectively, the main difference is that we use the maximum average colour degree $\avgdeg_L(H)$ instead of $\Delta_L(H)$, and we obtain the weaker conclusion $\mu_{L'}(H') \leq d'^{1/5}$ rather than $\mu_{L'}(H') \leq 10$.  The reason for this difference is that~\cite[Lemma~3.2]{LoSu07} does not hold with these substitutions.  However,~\cite[Lemma~3.3]{LoSu07} does, so we follow the proof of~\cite[Theorem~3.1]{LoSu07}, iteratively applying~\cite[Lemma~3.3]{LoSu07} until the point at which it is possible to apply~\cite[Lemma~3.2]{LoSu07}.  To obtain the requirement $\Delta(H') \leq d'\log^{1/2} d'$, we simply observe that for each $v \in V(G)$, the number of colours $c\in L'(v)$ with $\deg_H(c) > d'\log^{1/2} d'$ is a vanishingly small fraction of $|L'(v)|$, so we delete them, as in~\cite[Proposition~4.1]{MoTh12}.

Since the proof of Theorem~\ref{list-reduction-lemma} so closely resembles the proofs of these other results, we defer it to the appendix.  For most of what remains of the paper, we focus on the second phase of our semi-random procedure.

For convenience, we introduce some further notation. If 
$H$ is a cover graph for $G$ via $L$
then we say that an {\em $(L, H)$-colouring} is a colouring $\phi$ of $V(G)$ such that $\phi(v) \in L(v)$ for every vertex $v\in V(G)$, and $\phi$ is {\em proper} if $\{\phi(v) : v \in V(G)\}$ is an independent transversal of $H$ with respect to $L$.  If $C\subseteq V(G)$ and $\phi$ is a colouring with domain $C$ such that $\phi(v) \in L(v)$ for each $v\in C$, then we say $\phi$ is a \textit{partial $(L, H)$-colouring}, and 
\begin{itemize}
\item we say $v\in C$ is \textit{$\phi$-coloured} and \textit{$\phi$-uncoloured} otherwise,
\item we say $c\in V(H)$ is \textit{$\phi$-unuseable} if $c\phi(v) \in E(H)$ for some $v \in C$ and $c$ is \textit{$\phi$-useable} otherwise,
\item we let $H^\phi = H\setminus \cup_{v\in S} N_H(\phi(v))$, that is, the graph induced by $H$ on the $\phi$-useable colours,
\item we let $L^\phi(v) = L(v) \cap V(H^\phi)$ for each $v \in V(G)$, that is, the set of $\phi$-useable colours in $L(v)$,
\item we let $\remainingColours = H^\phi - \cup_{v\in C}L(v)$, that is, the graph induced by $H$ on the $\phi$-useable colours in the list of a $\phi$-uncoloured vertex,
\item and we let $\remainingList = L^\phi|_{V(G)\setminus C}$.
\end{itemize}
If additionally $\{\phi(v) : v\in C\}$ is an independent set in $H$, then we say $\phi$ is proper.  
It is important to notice that, if $\phi$ is a proper partial $(L, H)$-colouring and $G - C$ has an $(\remainingList,\remainingColours)$-colouring, then $H$ has an independent transversal with respect to $L$.

In the second, main phase, we find a sequence of proper partial $(L, H)$-colourings $\phi$ of $G$ in which we gradually improve the ratio of $|L(v)|/\avgdeg_L(H)$ from $(1 + \eps)$ for each $v\in V(G)$ to 4 for each $\phi$-uncoloured vertex, at which point we can apply Proposition~\ref{prop:upper4}. The second phase is embodied by the following theorem.

\begin{theorem}\label{semirandom-partial-colouring-thm}
For every $\eps>0$ and every $H$ being a cover graph for $G$ via $L$ satisfying
  \begin{itemize}
  \item $|L(v)| \geq (1 + \eps)d$ for all $v\in V(G)$,
  \item $\avgdeg_L(H) \leq d$,
  \item $\mu_L(H) \leq d^{1/5}$, and
  \item $\Delta(H) \leq d\log^{1/2}d$
  \end{itemize}
  for $d$ sufficiently large,
  there is a proper partial $(L, H)$-colouring $\phi : C\subseteq V(G) \rightarrow V(H)$ of $H$ and an induced subgraph $H' \subseteq \remainingColours$ 
  that is a cover graph for $G-C$ via $L'$ for some $L'$
  such that $|L'(v)| \ge 4 \avgdeg_{L'}(H')$ for every $v \in V(G)\setminus C$.
\end{theorem}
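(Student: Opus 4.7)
My plan is to reduce Theorem~\ref{semirandom-partial-colouring-thm} to Proposition~\ref{prop:upper4} by means of a multi-round wasteful-colouring scheme of the type pioneered in~\cite{ReSu02} and refined in~\cite{LoSu07,MoTh12}, here adapted to the average-colour-degree and correspondence setting. In each round, each vertex $v$ still uncoloured under the current proper partial colouring $\phi$ is activated independently with some probability $p$; each activated vertex is assigned a colour drawn uniformly at random from $\availableList(v)$; and this assignment is retained precisely when no conflict with an activated neighbour arises. The useable lists of vertices that remain uncoloured are then updated by deleting colours now adjacent in $H$ to an assignment, and one iterates. After $T=T(\eps)$ rounds we should arrive at a proper partial colouring $\phi\colon C\to V(H)$ under which $|\remainingList(v)|\ge 4\avgdeg_{\remainingList}(\remainingColours)$ holds for every $v\in V(G)\setminus C$, at which point Proposition~\ref{prop:upper4} closes the argument.

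The iteration is driven by maintaining, for each still-uncoloured vertex $v$, a lower bound on the list-size random variable $\remainingColoursRV$ and an upper bound on the aggregate useable-colour degree $\oldColourDegRV$; the companion variables $\removedColoursRV$ and $\relColourDegRV$ enter as natural auxiliaries in the estimates. A one-round expectation calculation, exploiting the hypothesis $\mu_L(H)\le d^{1/5}$ so that the contribution of each activation is essentially independent of the others, together with the slack $|L(v)|\ge (1+\eps)d$ to absorb lower-order error terms and to extract a Jensen-type gap at the scale of $x\mapsto xe^{-ax}$, shows that the ratio $|\availableList(v)|/\avgdeg$ improves by a bounded factor strictly greater than $1$ per round, so that $T=O(\log(1/\eps))$ rounds suffice to take this ratio from $1+\eps$ up to $4$.

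The bulk of the technical work lies in concentration. The variable $\remainingColoursRV$ is altered by at most $\mu_L(H)+1=O(d^{1/5})$ when any one trial is perturbed, which is small against $|\availableList(v)|=\Theta(d)$, so the Molloy--Reed form of Talagrand's inequality (Theorem~\ref{MR-tala}) delivers the desired tail bound directly. The degree variables $\oldColourDegRV$ and $\relColourDegRV$ are more delicate: a single trial can in principle move either of them by up to $\Delta(H)\le d\log^{1/2}d$, by taking out a lone atypically high-degree useable colour. I would handle this by defining, for each such variable, an exceptional event $\sampleSpace^*$---for instance, ``an unusually large number of neighbours of $v$ in $G$ are activated'' or ``an atypically high-degree useable colour in $L(v)$ survives long enough to be relevant''---showing that $\Prob{\sampleSpace^*}$ is super-polynomially small in $d$, and then invoking the Bruhn--Joos inequality (Theorem~\ref{BJ-tala}) with upward $(s,\change)$-certificates witnessed by the handful of activations and colour choices lying in a bounded neighbourhood of $v$.

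Once per-vertex concentration is in place, the Lov\'asz Local Lemma combines the bad events (``$\remainingColoursRV(v)$ too small'' or ``aggregate useable-colour degree at $v$ too large'') into an event of strictly positive probability; each bad event depends only on the trials in the distance-two neighbourhood of $v$ in $G$, so the dependency degree is $\operatorname{poly}(d)$, comfortably beaten by concentration tails of order $\exp(-d^{\Omega(1)})$. The hardest step, I expect, will be arranging the Bruhn--Joos application so that the exceptional set $\sampleSpace^*$ is rare enough to satisfy $\Prob{\sampleSpace^*}\le M^{-2}$, while $s$ and $\change$ remain small enough that the resulting deviation is simultaneously smaller than the per-round ratio improvement and matched against the LLL dependency degree. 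Threading these three tensions at once, while fully exploiting $\mu_L(H)\le d^{1/5}$ and $\Delta(H)\le d\log^{1/2}d$, is the essential technical content of the section.
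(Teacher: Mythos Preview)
Your high-level plan matches the paper's: iterate a wasteful random partial-colouring step, show via Talagrand/Bruhn--Joos concentration and the Local Lemma that each round improves the list-to-average-degree ratio, and stop once the ratio reaches $4$. Two pieces of the sketch, however, are genuinely off.

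First, the per-round gain is not a ``bounded factor strictly greater than $1$''. The paper's one-round lemma (Lemma~\ref{nibble-lemma}) shrinks $\avgdeg$ by a factor $1-\tfrac{p}{1+\eps/4}$ while shrinking $\Lambda$ only by $1-\tfrac{p}{1+3\eps/4}$, so the ratio improves by $1+\Theta(\eps p)$. One must take $p=\Theta(\log^{-1}d)$, not a constant: with constant $p$ the degree parameter $d_i$ decays geometrically and drops below the ``sufficiently large'' threshold long before the ratio reaches $4$. With $p\asymp\log^{-1}d$ the number of rounds is $\Theta((\log d)/\eps)$, not $O(\log(1/\eps))$; one then verifies $d_i=\Omega(d)$ throughout, so that $d^{1/5}\le d_i^{1/4}$, $d\log^{1/2}d\le d_i\log d_i$, and $p\ge\log^{-2}d_i$, keeping the hypotheses of each round valid.

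Second, tracking only $\remainingColoursRV_v$ and $\oldColourDegRV_v$ does not bound the \emph{new} average colour degree. The variable $\oldColourDegRV_v=\sum_{c\in\availableList(v)}\deg_H(c)$ records \emph{old} $H$-degrees of surviving colours; by itself it only says that average old degree stays about $d$. The actual degree drop comes from a separate per-colour variable $\colActivatedNbrsRV_{v,c}=\bigl|N_H(c)\cap\bigcup_{u\in\colouredVtcs}L(u)\bigr|$, counting how many of $c$'s neighbours land in the list of a successfully coloured vertex. Showing $\colActivatedNbrsRV_{v,c}\gtrsim p\,\deg_H(c)$ for every surviving $c$ is what drives $\avgdeg$ down, and it is precisely this variable (via $\uncolActivatedNbrsRV_{v,c}$) whose concentration needs Bruhn--Joos, with exceptional event $\sampleSpace^*_v=\{\exists u\in N_G^2(v),\,c\in L(u):\conflictsRV_{u,c}\ge\log^2 d\}$. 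Your sketch places the Bruhn--Joos burden on $\oldColourDegRV$ and $\relColourDegRV$, but in the paper those are handled by the ordinary Molloy--Reed form of Talagrand; the exceptional-set machinery is reserved for $\colActivatedNbrsRV$, which your inventory omits.
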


We prove Theorem~\ref{semirandom-partial-colouring-thm} in Section~\ref{sec:semirandom}.  Our proof of Theorem~\ref{semirandom-partial-colouring-thm} incorporates ideas from both~\cite{LoSu07} and~\cite{MoTh12}.  We find the partial colouring $\phi$ in several iterations. Each iteration slightly improves the ratio of $|L(v)|/\avgdeg(H)$ without affecting the other parameters too much, so that we can proceed for $\Theta(\log d)$ iterations.  The main hurdle is that $\mu_L(H)$ can be relatively large, which affects the concentration of our random variables, but we can overcome this difficulty using Theorem~\ref{BJ-tala}, the ``exceptional outcomes'' version of Talagrand's Inequality.

We conclude this section with a proof of Theorem~\ref{thm:main} assuming Theorem~\ref{semirandom-partial-colouring-thm}.

\begin{proof}[Proof of Theorem~\ref{thm:main}]
  By Theorem~\ref{list-reduction-lemma}, it suffices to prove for any $\eps>0$ that for sufficiently large $d'$, if we have 
  a cover graph $H'$ for $G$ via $L'$
  satisfying $|L'(v)| \geq (1 + \eps)d'$, $\avgdeg_{L'}(H') \leq d'$, $\mu(H') \leq d'^{1/5}$, and $\Delta(H') \leq d'\log^{1/2}d'$, then there is an independent transversal of $H'$ with respect to $L'$. This is because since $H'$ is an induced subgraph of $H$, an independent transversal of $H'$ with respect to $L'$ must also be an independent transversal of $H$ with respect to $L$.

  Now by Theorem~\ref{semirandom-partial-colouring-thm}, there is a proper partial $(L', H')$-colouring $\phi : C\subseteq V(G)\rightarrow V(H')$ of $H$ and an induced subgraph $H'' \subseteq (H')^\phi_{\mathrm{uncol}}$ 
  that is a cover graph for $G-C$ via $L''$
  such that $|L''(v)| \ge 4\avgdeg_{L''}(H'')$ for every $v \in V(G)\setminus C$.  By Proposition~\ref{prop:upper4}, there is an independent transversal of $H''$ with respect to $L''$, or equivalently, $G - C$ has an $(L'', H'')$-colouring.  By combining an $(L'', H'')$-colouring of $G - C$ with $\phi$, we obtain an $(L', H')$-colouring of $G$, so $H'$ has an independent transversal with respect to $L'$, as required.
\end{proof}

\section{The main phase}\label{sec:semirandom}

We prove Theorem~\ref{semirandom-partial-colouring-thm} by applying several iterations of the following lemma.

\begin{lemma}\label{nibble-lemma}
  For every $\eps > 0$, the following holds for sufficiently large $d$.
If $H$ is a cover graph for $G$ via $L$ satisfying
  \begin{itemize}
  \item $\avgdeg_L(H) \leq d$,
  \item $\Delta(H) \leq d\log d$,
  \item $|L(v)| = \lceil\Lambda\rceil$ for all $v\in V(G)$, where $(1 + \eps)d \le \Lambda \le 4d$, and
  \item $\mu_L(H) \leq d^{1/4}$, 
  \end{itemize}
  and $\log^{-1} d \geq p \geq \log^{-2}d$,
  then there is a proper partial $(L, H)$-colouring $\phi : C\subseteq V(G) \rightarrow V(H)$ of $H$ and an induced subgraph $H' \subseteq \remainingColours$ 
  that is a cover graph for $G-C$ via $L'$ for some $L'$
  such that 
  \begin{itemize}
  \item $\avgdeg_{L'}(H') \leq \left(1 - \frac{p}{1 + \eps/4}\right)d$ and
  \item $|L'(v)| = \left\lceil\left(1 - \frac{p}{1 + 3\eps/4}\right)\Lambda\right\rceil$ for every $v\in V(G)\setminus C$.
  \end{itemize}
\end{lemma}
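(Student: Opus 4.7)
My plan is to carry out a single semi-random nibble step and close with the Lov\'asz Local Lemma applied to two bad events per vertex. Independently for each $v\in V(G)$ activate $v$ with probability $p$, and let each activated vertex pick $c(v)\in L(v)$ uniformly at random. Set $\phi(v)=c(v)$ (so $v\in C$) when $c(v)c(v')\notin E(H)$ for every activated $v'$ with $vv'\in E(G)$; leave $v$ uncoloured otherwise. After the random phase, for each $v\notin C$ arbitrarily truncate $\availableList(v)$ to a sub-list $L'(v)$ of size exactly $\lceil(1-p/(1+3\eps/4))\Lambda\rceil$, and take $H'$ to be the induced subgraph of $\remainingColours$ on $\cup_{v\notin C}L'(v)$.

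For each $c\in L(v)$,
\[
\Prob{c\text{ is $\phi$-useable}}\ge\prod_{c''\in N_H(c)}\bigl(1-p/\Lambda\bigr)\ge 1-p\deg_H(c)/\Lambda,
\]
so averaging using $\avgdeg_L(H)\le d$ gives $\Expect{|\availableList(v)|}\ge \Lambda-pd$, beating the target $(1-p/(1+3\eps/4))\Lambda$ by at least $p\eps d/(4(1+3\eps/4))$ since $\Lambda\ge(1+\eps)d$. For the edge-sum $Z_v:=\sum_{c\in L(v)\cap V(\remainingColours)}\deg_{\remainingColours}(c)$, each edge $cc'\in E(H)$ with $c\in L(v)$, $c'\in L(v')$ contributes only when $v,v'\notin C$ and both $c,c'$ are useable, which occurs with probability at most $(1-p)^2+o(p)$; summing over the at most $d\Lambda$ such edges and comparing with the target $d\Lambda(1-p/(1+\eps/4))(1-p/(1+3\eps/4))$ yields slack of order $p\eps d\Lambda$, using $2-1/(1+\eps/4)-1/(1+3\eps/4)=\eps+O(\eps^2)$.

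Introduce the bad events $A_v^{(1)}=\{|\availableList(v)|<(1-p/(1+3\eps/4))\Lambda\}$ and $A_v^{(2)}=\{v\notin C,\, Z_v>(1-p/(1+\eps/4))d\lceil(1-p/(1+3\eps/4))\Lambda\rceil\}$, both of which depend only on trials at vertices within $G$-distance $2$ of $v$. Since $\Delta(H)\le d\log d$, the dependency degree of each bad event is polynomial in $d$, so the Local Lemma closes as soon as both have probability $d^{-\omega(1)}$. The event $A_v^{(1)}$ is easy: bound $\Lambda-|\availableList(v)|$ by the number $Y_v$ of colours in $L(v)$ that are \emph{threatened} (some activated $v_{\tilde c}$ picked $\tilde c\in N_H(c)$). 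Then $Y_v$ has Lipschitz constant $\change=O(\mu)$ in the per-vertex trials, admits upward certificates of size $s=Y_v$ (one witness trial per threatened colour, so $r=1$), and has expectation at most $pd$, so Theorem~\ref{MR-tala} gives the required $d^{-\omega(1)}$ bound on $\Prob{Y_v>\Expect{Y_v}+p\eps d/100}$.

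The main obstacle is the concentration of $Z_v$. Changing a single trial at $v''$ can shift the useability of up to $2\mu$ colours in a given list, and each such colour carries up to $\Delta(H)\le d\log d$ incident $H$-edges, so a naive Lipschitz bound yields $\change=\Theta(\mu\cdot d\log d)$, far too large for Theorem~\ref{MR-tala} to deliver $d^{-\omega(1)}$. I would therefore invoke Theorem~\ref{BJ-tala} with an exceptional set $\sampleSpace^*$ consisting of outcomes in which some coloured vertex ends up with a colour of abnormally high $H$-degree, or in which some $v''$ has abnormally many activated $G$-neighbours; by Chernoff each such atypicality occurs with probability exponentially small in a power of $d$, so $\Prob{\sampleSpace^*}\le M^{-2}$ with $M\le d\Lambda$. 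On $\sampleSpace\setminus\sampleSpace^*$ the effective Lipschitz constant for $Z_v$ shrinks to $O(\mu\cdot\mathrm{polylog}\,d)$, and upward certificates for $Z_v$ of size $O(pd\Lambda)$ can be constructed by listing, for each contributing edge, the polylogarithmically many trials certifying the survival of its endpoints. Plugging these parameters into Theorem~\ref{BJ-tala} yields a tail bound for $Z_v$ that beats the $p\eps d\Lambda$ slack and closes the Local Lemma.
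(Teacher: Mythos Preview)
Your random procedure, the two bad events, and the Local Lemma closure are exactly the paper's framework, and your treatment of $A_v^{(1)}$ via $Y_v$ is essentially the paper's argument for $\removedColoursRV_v$. The genuine gap is in the concentration of $Z_v$, where both of your quantitative claims fail.

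\textbf{Lipschitz constant.} You assert that on non-exceptional outcomes the effective Lipschitz constant for $Z_v$ is $O(\mu\cdot\mathrm{polylog}\,d)$. It is not. Changing the colour choice at a single activated $w$ can flip the useability of up to $\mu$ colours $c\in L(v)$, and each such $c$ contributes $\deg_{\remainingColours}(c)$ to $Z_v$; this is governed by $\deg_H(c)$, which can be as large as $d\log d$ and is a \emph{deterministic} feature of $H$, not something an exceptional-outcome restriction can tame. Hence the effect of one trial is $\Theta(\mu\cdot d\log d)=\Theta(d^{5/4}\log d)$. Your proposed exceptional events (high-degree $\phi(w)$, many activated $G$-neighbours) bound other effects but do nothing about this one. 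With $\change\asymp d^{5/4}$, the tolerance $t\asymp p\eps d\Lambda\asymp d^2/\log^2 d$ forces $s\ll d^{3/2}$ in Theorem~\ref{BJ-tala}, which is incompatible with the next point.

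\textbf{Upward certificates.} You claim certificates of size $O(pd\Lambda)$ obtained by listing, per surviving edge, ``the polylogarithmically many trials certifying the survival of its endpoints''. But certifying that a colour $c$ is $\phi$-useable is a universal statement: \emph{every} vertex $w$ with $L(w)\cap N_H(c)\ne\varnothing$ must either be unactivated or choose a non-conflicting colour. If any such $w$ is omitted from the certificate index set $I$, then $\omega'$ may alter the trial at $w$ freely and destroy useability. There are up to $\deg_H(c)\le d\log d$ such $w$ for each endpoint, and summing over the colours $c'\in N_H(L(v))$ that appear in surviving edges already forces $|I|$ well beyond $d^2$, let alone $d^{3/2}$.

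The paper sidesteps both obstacles by \emph{not} attacking $Z_v$ directly. It decomposes the target into three pieces, each engineered so that the quantity with upward certificates is the ``loss'' side, which is witnessed by existential (not universal) events:
\begin{itemize}
\item $\remainingColoursRV_v=|L(v)|-\removedColoursRV_v$, where $\removedColoursRV_v$ is certified by one conflict witness per removed colour (this is your $A_v^{(1)}$);
\item $\colActivatedNbrsRV_{v,c}=\totalActivatedNbrsRV_{v,c}-\uncolActivatedNbrsRV_{v,c}$, where the first term is a sum of independent indicators (Chernoff) and the second has upward certificates via ``$u$ activated plus a conflict witness'', handled by Theorem~\ref{BJ-tala} with the paper's exceptional set $\sampleSpace^*_v$;
\item $\oldColourDegRV_v=\sum_{c\in\availableList(v)}\deg_H(c)$, controlled via the auxiliary $\relColourDegRV_v$ restricted to \emph{relevant} colours of degree $\ge d/\log^3 d$; this makes the certificate rate $r=O((\log^3 d)/d)$ in Theorem~\ref{MR-tala} small enough to absorb the unavoidable $\change=O(\mu\,d\log d)$.
\end{itemize}
The relevant-colour trick in the last bullet is precisely what replaces your unattainable $O(\mu\cdot\mathrm{polylog})$ Lipschitz bound: one accepts $\change\asymp d^{5/4}\log d$ but pairs it with $r\asymp(\log^3 d)/d$, so that $\change^2 r\Expect{X}$ stays below $t^2$.
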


Before proving Lemma~\ref{nibble-lemma}, we first show how iteration of this lemma yields Theorem~\ref{semirandom-partial-colouring-thm}.  As demonstrated in the proof below, applying this lemma improves the ratio of list size to maximum average colour degree, since crucially $\left.(1 - \frac{p}{1 + 3\eps/4})\big/(1 - \frac{p}{1 + \eps/4})\right. \geq 1 + \eps p / 4$. 

\begin{proof}[Proof of Theorem~\ref{semirandom-partial-colouring-thm}]
  Let $p = \log^{-1} d$, $d_0 = d$ and $\Lambda_0 = (1 + \eps)d$, and for each integer $0\le i \le 12/(\eps p)$, let
  \begin{align*}
    d_{i + 1} = \left(1 - \frac{p}{1 + \eps/4}\right)d_i && \mathrm{and} && \Lambda_{i + 1} = \left(1 - \frac{p}{1 + 3\eps/4}\right)\Lambda_i.
  \end{align*}
  Note that for every integer $0\le i \le 12/(\eps p)$, we have
  \begin{equation*}
    d_{i} \ge \left(1 - \frac{p}{1 + \eps/4}\right)^{12/(\eps p)}d 
    = \Omega(d),
  \end{equation*}
  so we may always assume $d_i$ is large enough to apply Lemma~\ref{nibble-lemma}.  We may moreover assume $d\log^{1/2} d \leq d_{i}\log d_i$, $d^{1/5} \leq d_i^{1/4}$, and $p \geq \log^{-2}d_i$.
  Since
   \begin{equation*}
    \frac{1 - \frac{p}{1 + 3\eps/4}}{1 - \frac{p}{1 + \eps/4}} 
    = 1 + \frac{\eps/2}{(1 + 3\eps/4)(1 + \eps/4)p^{-1} - (1 + 3\eps/4)} \geq 1 + \frac{\eps p}{4},
  \end{equation*}
  for every integer $1\le i \le 12/(\eps p)$ we have
  \begin{equation*}
    \frac{\Lambda_{i}}{d_{i}} \geq \left(1 + \frac{\eps p}{4}\right)\frac{\Lambda_{i-1}}{d_{i-1}}
  \end{equation*}
  and moreover
  \begin{equation*}
    \frac{\lceil \Lambda_{\lfloor 12/(\eps p)\rfloor} \rceil}{d_{\lfloor 12/(\eps p)\rfloor}}
    \geq \left(1 + \frac{\eps p}{4}\right)^{\lfloor 12/(\eps p) \rfloor}\frac{\Lambda_0}{d_0}
    = \left(1 + \frac{\eps p}{4}\right)^{\lfloor 12/(\eps p) \rfloor}(1+\eps) \geq 4.
  \end{equation*}
  In particular, there exists an integer $1 \leq i^* \leq 12 / (\eps p)$ such that $\lceil\Lambda_{i^*}\rceil / d_{i^*} \geq 4$ and
  $\lceil\Lambda_{i^* - 1}\rceil / d_{i^* - 1} \leq 4$.  (We may assume $\lceil\Lambda_{0}\rceil / d_{0} \leq 4$ as otherwise $C = \varnothing$, $H' = H$, and $L' = L$ satisfy the theorem).
  
  We assume without loss of generality that $|L(v)| = \lceil\Lambda_0\rceil$ for every $v \in V(G)$, since we can truncate each list until equality holds by removing from $H$ those colours $c \in L(v)$ for which $|N_H(c) \cap (V(H)\setminus L(v))|$ is largest without increasing the maximum average colour degree.  Now let $H_0 = H$, $G_0 = G$ and $L_0 = L$. Due to the above calculations, inductively by Lemma~\ref{nibble-lemma}, for each integer $0\le i < i^*$, there is a proper partial $(L_i, H_i)$-colouring $\phi_{i + 1} : C_{i + 1} \subseteq V(G_i) \rightarrow H_{i}$ and an induced subgraph $H_{i + 1}$
  that is a cover graph for $G_{i+1}=G_i-C_{i+1}$ via $L_{i+1}$ for some $L_{i+1}$
  satisfying
  \begin{itemize}
  \item $\avgdeg_{L_{i + 1}}(H_{i + 1}) \leq d_{i + 1}$ and
  \item $|L_{i + 1}(v)| = \lceil \Lambda_{i + 1}\rceil \ge (1+\eps) d_{i+1}$ (by above).
  \end{itemize}
  
  Let $C = \cup_{i=1}^{i^*} C_i$, let $\phi(v) = \phi_i(v)$ if $v \in C_i$ for some integer $1\le i \le i^*$, let $H' = H_{i^*}$, and let $L' = L_{i^*}$.  By construction, $\phi :C \rightarrow V(H)$ is a proper partial $(L, H)$-colouring and $H'\subseteq \remainingColours$ is a cover graph for $G-C$ via $L'$.  Finally, by the choice of $i^*$, we have (for $d$ large enough)
  \begin{equation*}
    \frac{|L'(v)|}{\avgdeg_{L'}(H')} 
    \ge \frac{\lceil \Lambda_{i^*} \rceil}{d_{i^*}}
     \geq 4. \qedhere
  \end{equation*}
\end{proof}

It only remains to prove Lemma~\ref{nibble-lemma}.  For the remainder of this section, let $\eps > 0$ and $d$ sufficiently large, let $p$ satisfy $\log^{-1} d \geq p \geq \log^{-2}d$, and let $H$ be a cover graph for $G$ via $L$ satisfying the hypotheses of Lemma~\ref{nibble-lemma}.
Throughout the proof we assume that 
if $vv' \in E(G)$, then there exists $c \in L(v)$ and $c'\in L(v')$ such that $cc'\in E(H)$.  Thus $\Delta(G) \leq \Lambda d\log d$.
Even when we do not explicitly state it, we will always assume that $d$ is sufficiently large for certain inequalities to hold.

We will analyse a random proper partial $(L, H)$-colouring and use the Lov\' asz Local Lemma to show that with nonzero probability it satisfies the properties we desire.  Let us now describe this random colouring.

A \textit{wasteful $(L, H)$-colouring} is a pair $(A, \phi)$ where
\begin{itemize}
\item $A \subseteq V(G)$ is a set of \textit{activated vertices} and
\item $\phi$ is a partial $(L, H)$-colouring of $G$ with domain $A$.
\end{itemize}
Note that if $(A, \phi)$ is a wasteful $(L, H)$-colouring, then $\phi$ is not necessarily proper.
We let $\colouredVtcs$ be the set of vertices $v\in A$ with no neighbor $u\in A$ such that $\phi(v)\phi(u) \in E(H)$. 

 To prove Lemma~\ref{nibble-lemma}, we find a wasteful colouring $(A, \phi)$ such that every $v\in V(G)$ satisfies
\begin{itemize}
\item $\deg_{H^\phi}(c) \leq (1 - p + o(p))d$ for every $c\in L^\phi(v)$,
\item $|L^\phi(v)| \geq (1 - \frac{p + o(p)}{1 + \eps})\Lambda$, and
\item $\sum_{c\in L^\phi(v)}\deg_{H}(c) \leq (1 - \frac{p + o(p)}{1 + \eps})d\Lambda$.
\end{itemize}
In this case, we show that $\phi|_{\colouredVtcs}$ satisfies the conclusion of Lemma~\ref{nibble-lemma} where $C = \colouredVtcs$, $H' = H^\phi - \cup_{v\in\colouredVtcs} L(v)$, and $L' = L^\phi|_{V(G)\setminus \colouredVtcs}$. 
 We call the colouring wasteful because there may be $\phi|_{\colouredVtcs}$-useable colours not in $H'$ that we do not use.

The \textit{wasteful random colouring procedure with activation probability $p$} samples a wasteful $(L, H)$-colouring $(A, \phi)$ as follows:
\begin{itemize}
\item For each $v\in V(G)$, activate $v$ (i.e. let $v\in A$) with probability $p$.
\item For each $v\in A$, choose $\phi(v) \in L(v)$ uniformly at random.
\end{itemize}

In the analysis of this procedure, it will be helpful to
 define the following random variables for each vertex $v\in V(G)$ and $c\in L(v)$:
\begin{itemize}
\item $\remainingColoursRV_v(A, \phi) = |\availableList(v)|$, the number of $\phi$-useable colours in $L(v)$,
\item $\removedColoursRV_v(A, \phi) = |L(v)\setminus\availableList(v)|$, the number of $\phi$-unuseable colours in $L(v)$,
\item $\colActivatedNbrsRV_{v, c}(A, \phi) = \left|N_H(c) \cap \left(\cup_{u \in \colouredVtcs}L(u)\right)\right|$, 
\item $\totalActivatedNbrsRV_{v, c}(A, \phi) = \left|N_H(c) \cap \left(\cup_{u \in A}L(u)\right)\right|$, and
\item $\uncolActivatedNbrsRV_{v, c}(A, \phi) = \left|N_H(c) \cap \left(\cup_{u \in A\setminus\colouredVtcs}L(u)\right)\right|$.
\end{itemize}

Most of the proof is devoted to bounding the expected values of these random variables and showing that they are concentrated around their expectation with high probability.  For proving concentration, it will be convenient to assume that $\phi = \psi|_A$ where $\psi$ is a (not necessarily proper) $(L, H)$-colouring of $G$ where $\psi(v) \in L(v)$ is chosen uniformly at random for each $v\in V(G)$.  In this way, the random variables above are determined by $2|V(G)|$ independent random trials, half of which determine which vertices in $G$ are in $A$ and half of which determine which colour is assigned to each vertex of $G$ by $\psi$, which allows us to apply Theorems~\ref{MR-tala} and~\ref{BJ-tala}.

Our first step is to bound the expected values of some of these random variables.
To this end, let
\begin{equation*}
  \keep(v, c) = \Prob{c \in L^\phi(v)} = (1 - p/\Lambda)^{\deg_H(c)}.
\end{equation*}
By convexity of the exponential function and Jensen's Inequality, for $v\in V(G)$,
\begin{align}
  \label{convexity-of-keep}
  \sum_{c \in L(v)}\keep(v, c) &= \sum_{c \in L(v)}\left(1 - \frac{p}{\Lambda}\right)^{\deg_H(c)} \geq \left(1 - \frac{p}{\Lambda}\right)^{\sum_{c\in L(v)}\deg_H(c)/\Lambda}\Lambda \nonumber\\
  & \ge \left(1 - \frac{p}{\Lambda}\right)^d\Lambda
   \geq \left(1 - \frac{p}{1 + \eps}\right)\Lambda.
\end{align}
\begin{claim}\label{expectations-lemma}
  Every vertex $v\in V(G)$ satisfies
  \begin{equation}\label{expected-list-lize}
    \Expect{\remainingColoursRV_v} \geq \left(1 - \frac{p}{1 + \eps}\right) |L(v)|,
  \end{equation}
  and for every $c\in L(v)$, we have
  \begin{equation}
    \label{expected-number-of-coloured-nbrs}
    \Expect{\colActivatedNbrsRV_{v, c}} \geq p\left(1 - \frac{p}{1 + \eps}\right)\deg_H(c).
  \end{equation}
\end{claim}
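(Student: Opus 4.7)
The plan is to derive both inequalities from the convexity bound (\ref{convexity-of-keep}) via linearity of expectation. First, observe that the Jensen/Bernoulli argument behind (\ref{convexity-of-keep}) in fact yields
\[
\sum_{c\in L(v)}\keep(v,c)\ \ge\ |L(v)|\,(1-p/\Lambda)^d\ \ge\ (1-p/(1+\eps))\,|L(v)|,
\]
since applying Jensen to the sum of $|L(v)|$ terms produces a prefactor of $|L(v)|=\lceil\Lambda\rceil$ rather than $\Lambda$, and the two differ only by the harmless rounding $\lceil\Lambda\rceil\ge\Lambda$.

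For (i), $\remainingColoursRV_v=\sum_{c\in L(v)}\mathbf{1}[c\in\availableList(v)]$, so by linearity of expectation
\[
\Expect{\remainingColoursRV_v}=\sum_{c\in L(v)}\keep(v,c),
\]
and the strengthened form of (\ref{convexity-of-keep}) above gives the required inequality.

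For (ii), since $\{L(u)\}_{u\in V(G)}$ partitions $V(H)$, I decompose
\[
\colActivatedNbrsRV_{v,c}=\sum_{u\in V(G)}|N_H(c)\cap L(u)|\cdot\mathbf{1}[u\in\colouredVtcs].
\]
Taking expectations and using $\sum_u|N_H(c)\cap L(u)|=\deg_H(c)$, it suffices to show $\Prob{u\in\colouredVtcs}\ge p(1-p/(1+\eps))$ uniformly in $u$. To this end, condition on $u\in A$ (probability $p$) and then on $\psi(u)=c'\in L(u)$ (each with probability $1/|L(u)|$). The event that no $u'\in N_G(u)\cap A$ satisfies $\psi(u')c'\in E(H)$ is determined by the trials at vertices other than $u$, hence is independent of the conditioning; and by independence across $u'$ its probability factors as
\[
\prod_{u'\in N_G(u)}\bigl(1-p\,|N_H(c')\cap L(u')|/|L(u')|\bigr),
\]
which is exactly $\keep(u,c')$: only neighbours of $u$ in $G$ can contribute (since $H$ is a cover graph for $G$), and $u$ itself contributes a trivial factor $1$ because $L(u)$ is an independent set in $H$. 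Thus
\[
\Prob{u\in\colouredVtcs}=\frac{p}{|L(u)|}\sum_{c'\in L(u)}\keep(u,c')\ \ge\ p\bigl(1-p/(1+\eps)\bigr),
\]
by the strengthened form of (\ref{convexity-of-keep}) applied at $u$, and (ii) follows.

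There is no serious obstacle; the argument is essentially bookkeeping on top of (\ref{convexity-of-keep}). The only mildly subtle step is identifying the conditional product with $\keep(u,c')$, which rests on the mutual independence of the activation and colour-choice trials across distinct vertices together with the cover-graph property: edges of $H$ go only between distinct lists, so $u$ can never block a colour in its own list regardless of whether it is activated or what colour it receives.
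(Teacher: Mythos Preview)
Your proof is correct and follows essentially the same route as the paper: both parts are obtained by linearity of expectation together with the convexity bound~(\ref{convexity-of-keep}), and your identity $\Prob{u\in\colouredVtcs}=\frac{p}{|L(u)|}\sum_{c'\in L(u)}\keep(u,c')$ is exactly what the paper uses (written there as a sum over $(u,c')\in N_H(c)$ rather than grouped by $u$). Your observation that Jensen actually yields the prefactor $|L(v)|$ rather than $\Lambda$ is a harmless sharpening, and your justification that the conditioning at $u$ is independent of the ``$c'$ is useable'' event (because $L(u)$ is independent in $H$) fills in a detail the paper leaves implicit.
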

\begin{proof}
  First~\eqref{expected-list-lize} follows from~\eqref{convexity-of-keep} and Linearity of Expectation.

  Now we prove~\eqref{expected-number-of-coloured-nbrs}.
  By Linearity of Expectation, we have
  \begin{equation*}
    \Expect{\colActivatedNbrsRV_{v, c}} = \sum_{(u, c')\in N_H(c)}\frac{p}{|L(u)|}\sum_{c''\in L(u)}\keep(u, c''),
  \end{equation*}
  and~\eqref{expected-number-of-coloured-nbrs} follows from the above equality combined with~\eqref{convexity-of-keep}, since $\Lambda = |L(u)|$ for every $u\in V(G)$.
\end{proof}

Now we need to show that the random variables in Claim~\ref{expectations-lemma} are close to their expectation with high probability.  We use Theorem~\ref{BJ-tala}, the exceptional outcomes version of Talagrand's Inequality.  To that end, we define an exceptional outcome for each vertex and show that it is unlikely.
First, for each vertex $u\in V(G)$ and $c\in L(u)$, we define
\begin{itemize}
\item $\conflictsRV_{u, c}(A, \phi) = \left|\left\{w \in N_G(u)\cap A : \phi(w) \in N_H(c)\right\}\right|$.
\end{itemize}
Then, for each vertex $v\in V(G)$, we define
\begin{itemize}
\item $\sampleSpace^*_{v} = \{(A, \phi) : \exists u\in N^2_G(v),\exists c\in L(u),\ \conflictsRV_{u, c}(A, \phi) \geq \log^2 d \}$.
\end{itemize}
The events $\sampleSpace^*_v$ include more outcomes than is necessary, but it is simpler to define it as we have.  Now we bound the probability of these exceptional events.

\begin{claim}\label{bad-event-prob}
  Every vertex $v\in V(G)$ satisfies
  \begin{equation*}
    \Prob{\sampleSpace^*_{v}} \leq 16e^3d^5\left(\frac{e}{\log d}\right)^{\log^2 d - 3}.
  \end{equation*}
\end{claim}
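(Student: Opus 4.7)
The plan is a union bound over the pairs $(u,c)$ appearing in the definition of $\sampleSpace^*_v$, preceded by a sharp tail bound on each individual count $\conflictsRV_{u,c}$.

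First I would fix any $u\in N^2_G(v)$ and any $c\in L(u)$ and observe that $\conflictsRV_{u,c} = \sum_{w\in N_G(u)} \mathbb{1}[w\in A,\ \phi(w)\in N_H(c)]$ is a sum of \emph{independent} Bernoulli indicators, since the activation trials and the colour trials at distinct vertices are independent. The indicator for $w$ has probability $p\cdot |N_H(c)\cap L(w)|/|L(w)|$, and because the lists $\{L(w)\}_{w\in V(G)}$ are pairwise disjoint,
\[
\Expect{\conflictsRV_{u,c}} \;=\; \frac{p}{\Lambda}\sum_{w\in N_G(u)} |N_H(c)\cap L(w)| \;\le\; \frac{p\,\deg_H(c)}{\Lambda} \;\le\; \frac{p\,d\log d}{(1+\eps)d} \;\le\; \frac{1}{1+\eps}\;\le\;1,
\]
using $\Delta(H)\le d\log d$, $\Lambda\ge(1+\eps)d$, and $p\le\log^{-1}d$.

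Next I would apply the standard subset union bound for a sum $X$ of independent Bernoullis with $\Expect{X}=\mu$:
\[
\Prob{X\ge k} \;\le\; \sum_{|S|=k}\prod_{i\in S}\Pr[X_i=1] \;\le\; \frac{1}{k!}\Bigl(\sum_i \Pr[X_i=1]\Bigr)^{\!k} \;=\; \frac{\mu^k}{k!}.
\]
With $\mu\le 1$ and $k=\lceil\log^2 d\rceil$, Stirling gives $\Prob{\conflictsRV_{u,c}\ge \log^2 d}\le 1/(\log^2 d)! \le (e/\log^2 d)^{\log^2 d}$.

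Finally, I would bound the number of pairs $(u,c)$ to union-bound over. Since $\Delta(G)\le \Lambda d\log d$ (as noted at the start of Section~\ref{sec:semirandom}) and $\Lambda\le 4d$, we have $|N^2_G(v)|\le \Delta(G)^2+1\le 17d^4\log^2 d$ and so the number of pairs is at most $|N^2_G(v)|\cdot\Lambda\le 68d^5\log^2 d$. Hence
\[
\Prob{\sampleSpace^*_v} \;\le\; 68\,d^5\log^2 d\cdot\Bigl(\tfrac{e}{\log^2 d}\Bigr)^{\log^2 d}.
\]
A short calculation comparing logarithms shows that $(e/\log^2 d)^{\log^2 d} = e^3\,(\log d)^{-\log^2 d-3}\,(e/\log d)^{\log^2 d-3}$, so the factor $(\log d)^{-\log^2 d-3}$ easily absorbs the $\log^2 d$ and the ratio of constants, yielding the claimed bound $16e^3 d^5(e/\log d)^{\log^2 d-3}$ for sufficiently large $d$.

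The only genuinely delicate step is bounding $\Expect{\conflictsRV_{u,c}}$: one must avoid the naive estimate $\deg_G(u)\cdot p\mu_L(H)/\Lambda$, which is much too weak, and instead use disjointness of the lists $L(w)$ to replace $\sum_{w\in N_G(u)}|N_H(c)\cap L(w)|$ by $\deg_H(c)\le\Delta(H)$. Everything else is a routine combination of the binomial subset bound, Stirling's approximation, and the crude degree bound $\Delta(G)\le \Lambda d\log d$.
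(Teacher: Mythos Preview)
Your proof is correct and follows the same overall strategy as the paper: bound the upper tail of each $\conflictsRV_{u,c}$ via the subset (Poisson-type) union bound for a sum of independent Bernoulli indicators, then take a union bound over all pairs $(u,c)$ with $u\in N_G^2(v)$ and $c\in L(u)$.

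The execution differs in two small but noteworthy ways. First, you exploit $p\le\log^{-1}d$ to get $\Expect{\conflictsRV_{u,c}}\le 1$ and hence the clean bound $\Prob{\conflictsRV_{u,c}\ge k}\le 1/k!\le (e/\log^2 d)^{\log^2 d}$; the paper instead drops the factor $p$ and bounds $\Prob{\conflictsRV_{u,c}\ge\log^2 d}\le\sum_{i\ge\log^2 d}\binom{\deg_H(c)}{i}\Lambda^{-i}$, obtaining only $(e/\log d)^{\log^2 d}$ per term together with an extra factor $d\log d$ from summing over~$i$. Second, you correctly union-bound over all \emph{pairs} $(u,c)$, picking up the factor $\Lambda\le 4d$ for the choice of $c$; the paper's written argument union-bounds only over the at most $16d^4\log^2 d$ vertices $u\in N_G^2(v)$ and appears to omit this factor. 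Your sharper single-event bound (by a factor $(\log d)^{-\log^2 d}$) easily absorbs both this missing $\Lambda$ and the discrepancy in constants, so the stated inequality holds for $d$ sufficiently large, which is all that is needed since the ambient Lemma already assumes $d$ large.
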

\begin{proof}
  First we let $u \in V(G)$ and $c\in L(u)$ and bound the probability that $\conflictsRV_{u, c}$ is too large:
  \begin{equation*}
    \Prob{\conflictsRV_{u, c} \geq \log^2 d} \leq \sum_{i = \lceil \log^2 d\rceil}^{\deg_H(c)}\binom{\deg_H(c)}{i}\left(\frac{1}{\Lambda}\right)^i.
  \end{equation*}
  By applying the bound $\binom{\deg_H(c)}{i} < \left(\frac{e\deg_H(c)}{i}\right)^i$ and using the fact that $\deg_H(c) \leq d\log d \leq \Lambda \log d$, the righthand side of the above inequality is at most
  \begin{equation*}
    \sum_{i=\lceil \log^2 d\rceil}^{\deg_H(c)}\left(\frac{e\deg_H(c)}{i\Lambda }\right)^i \leq \sum_{i=\lceil \log^2 d\rceil}^{\deg_H(c)}\left(\frac{e\log d}{i}\right)^i.
  \end{equation*}
  Since each term in the sum is at most $(e/\log d)^{\log^2 d}$ and there are at most $d\log d$ terms, it follows that
  \begin{equation*}
    \Prob{\conflictsRV_{u, c} \geq \log^2 d} \leq ed\left(\frac{e}{\log d}\right)^{\log^2 d - 1}.
  \end{equation*}
  Since $\Delta(G) \leq \Lambda d\log d$, there are at most $16d^4\log^2 d$ vertices $u\in N^2_G(v)$.  Thus, combining the above inequality with the Union Bound, we have
  \begin{equation*}
    \Prob{\sampleSpace^*_v} \leq \left(16d^4\log^2 d\right)\cdot ed\left(\frac{e}{\log d}\right)^{\log^2 d - 1} \leq 16e^3d^5\left(\frac{e}{\log d}\right)^{\log^2 d - 3},
  \end{equation*}
  as required.
\end{proof}

Having bounded the probability of the exceptional outcomes, we can now prove concentration of the random variables in Claim~\ref{expectations-lemma}.

\begin{claim}\label{concentration-lemma}
  For $d$ sufficiently large, every vertex $v\in V(G)$ satisfies
  \begin{equation}
    \label{list-size-concentration}
    \Prob{\left|\remainingColoursRV_v - \Expect{\remainingColoursRV_v}\right| > d^{5/6}} \leq \exp\left(-d^{1/7}\right),
  \end{equation}
  and for every $c\in L(v)$, we have
  \begin{equation}
    \label{colour-degree-concentration}
    \Prob{\left|\colActivatedNbrsRV_{v,c} - \Expect{\colActivatedNbrsRV_{v,c}}\right| > d^{5/6}} \leq \exp\left(-d^{1/7}\right).
  \end{equation}
\end{claim}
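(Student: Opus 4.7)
The plan is to apply Talagrand-style concentration to each random variable. Both are functions of $2|V(G)|$ independent trials---one Bernoulli$(p)$ activation and one uniform colour choice $\psi(u)\in L(u)$ per vertex $u$, coupled so that $\phi=\psi|_A$ as noted before Claim~\ref{expectations-lemma}. Where we need an exceptional-outcomes version, we take $\sampleSpace^*_v$ from Claim~\ref{bad-event-prob}, whose probability is super-polynomially small and in particular far below the threshold $M^{-2}$ required by Theorem~\ref{BJ-tala} (with $M$ at most $|L(v)|\leq 4d$ or $\deg_H(c)\leq d\log d$ in our two cases).

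For~\eqref{list-size-concentration}, it is cleaner to work with the complementary random variable $Y_v\coloneqq\removedColoursRV_v=|L(v)|-\remainingColoursRV_v$, since deviations of $Y_v$ and $\remainingColoursRV_v$ around their respective expectations coincide in absolute value. Flipping any single trial switches the $\phi$-useability of at most $2\mu_L(H)\leq 2d^{1/4}$ colours of $L(v)$ (precisely those in $L(v)\cap(N_H(c_{\mathrm{old}})\cup N_H(c_{\mathrm{new}}))$), so the Lipschitz constant is $\change=2d^{1/4}$. Whenever $Y_v\geq s$, we may certify this by selecting $s$ unuseable colours and, for each, a single activated neighbour of $v$ whose colour lies in the corresponding conflicting set; the activation and colour trials of those witnesses give $r$-certificates with $r=2$, valid for \emph{every} outcome. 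So Theorem~\ref{MR-tala} applies directly (no exceptional set needed), and with $\Expect{Y_v}\leq\Lambda p=O(d/\log d)$ and $t=d^{5/6}$ its bound evaluates to $4\exp(-\Omega(d^{1/6}\log d))\leq\exp(-d^{1/7})$ for $d$ large.

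For~\eqref{colour-degree-concentration}, decompose $\colActivatedNbrsRV_{v,c}=\totalActivatedNbrsRV_{v,c}-\uncolActivatedNbrsRV_{v,c}$ and bound each deviation by $d^{5/6}/2$ with probability at most $\exp(-d^{1/7})/2$. The first piece is the independent sum $\totalActivatedNbrsRV_{v,c}=\sum_{u\in N_G(v)}|L(u)\cap N_H(c)|\cdot\mathbb{1}[u\in A]$; each summand is bounded by $\mu_L(H)\leq d^{1/4}$ and $\sum_u(|L(u)\cap N_H(c)|)^2\leq\mu_L(H)\deg_H(c)\leq d^{5/4}\log d$, so the Chernoff Bound yields $2\exp(-\Omega(d^{5/12}/\log d))$, easily beating the target. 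The second piece needs Theorem~\ref{BJ-tala}: for each $c'$ currently contributing to $\uncolActivatedNbrsRV_{v,c}(\omega)$, fix an activated blocker $w_{c'}\in N_G(u(c'))$ witnessing $u(c')\notin\colouredVtcs$, and include in $I$ the activation and colour trials of both $u(c')$ and $w_{c'}$. Then $|I|\leq 4\uncolActivatedNbrsRV_{v,c}(\omega)\leq 4\deg_H(c)\leq 4d\log d=:s$, and any $\omega'$ agreeing with $\omega$ on all but $t/\change$ coordinates of $I$ can lose at most $(t/\change)\cdot\mu_L(H)$ of the original contributors, so $\change=\mu_L(H)$ suffices. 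Plugging into Theorem~\ref{BJ-tala}, the exponent is $\Omega(d^{1/6}/\log d)\geq d^{1/7}$ for $d$ large, and adding $4\Prob{\sampleSpace^*_v}$ keeps the total at most $\exp(-d^{1/7})/2$.

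The subtle point in this plan---and the main obstacle---is verifying that trials \emph{outside} $I$ cannot cause originally counted contributors to escape $\uncolActivatedNbrsRV_{v,c}$: arbitrary reconfigurations outside $I$ could, in principle, alter the $\colouredVtcs$-membership of some $u(c')$. This is avoided precisely because the four trials packaged into $I$ for each contributor pin down the witness pair $(u(c'),w_{c'})$ together with both of their colours, so the blocking relation $\phi(w_{c'})\in N_H(\phi(u(c')))$ persists regardless of what happens elsewhere. It is this observation that lets us take the Lipschitz constant to be $\mu_L(H)$ rather than suffering an extra $\log^2 d$ blow-up from cascading conflict effects, and ultimately that makes the arithmetic close.
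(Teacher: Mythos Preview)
Your overall plan matches the paper's: pass to $\removedColoursRV_v$ for~\eqref{list-size-concentration}, decompose $\colActivatedNbrsRV_{v,c}=\totalActivatedNbrsRV_{v,c}-\uncolActivatedNbrsRV_{v,c}$ for~\eqref{colour-degree-concentration}, handle $\totalActivatedNbrsRV$ by Chernoff, and handle $\uncolActivatedNbrsRV$ by Bruhn--Joos with the exceptional set $\sampleSpace^*_v$ and four-trial witnesses $(u(c'),w_{c'})$. Your treatment of~\eqref{list-size-concentration} via Theorem~\ref{MR-tala} is a fine variant, and your Chernoff computation for $\totalActivatedNbrsRV$ is actually more careful than the paper's about the dependence between indicators sharing a vertex.

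The genuine gap is in your analysis of $\uncolActivatedNbrsRV_{v,c}$, specifically the claim that $\change=\mu_L(H)$ suffices. You correctly argue that if all four trials associated with a contributor $c'$ are held fixed, then $c'$ remains a contributor in $\omega'$ regardless of what happens outside $I$. But that is not the issue governing $\change$. The question is: how many contributors can be destroyed by altering a \emph{single} coordinate in $I$? If that coordinate is a trial of some vertex $x$ serving as a blocker, then every $c'$ with $w_{c'}=x$ is at risk. There may be up to $\conflictsRV_{x,\phi(x)}$ distinct vertices $u$ with $\phi(u)\phi(x)\in E(H)$ and $u\in A$, and each such $u$ carries up to $\mu_L(H)$ colours in $N_H(c)\cap L(u)$, so one changed coordinate can kill up to $\mu_L(H)\cdot\conflictsRV_{x,\phi(x)}$ contributors. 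It is precisely the non-exceptional assumption $\omega\notin\sampleSpace^*_v$ that caps $\conflictsRV_{x,\phi(x)}<\log^2 d$, forcing $\change=\Theta(\mu_L(H)\log^2 d)$ rather than $\mu_L(H)$; this is the value the paper uses. Your final paragraph conflates robustness against changes \emph{outside} $I$ (which your four-trial certificate does handle) with the per-coordinate Lipschitz effect \emph{inside} $I$ (which it does not reduce to $\mu$), so the ``no $\log^2 d$ blow-up'' conclusion is unjustified.

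The repair is painless: take $\change=d^{1/4}\log^2 d$ as in the paper. The exponent in Theorem~\ref{BJ-tala} then becomes $\Omega(d^{1/6}/\log^5 d)$ instead of your $\Omega(d^{1/6}/\log d)$, and this still dominates $d^{1/7}$ for large $d$, so the stated bound survives.
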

\begin{proof}
  First we prove \eqref{list-size-concentration}.  Since
  \begin{equation*}
    \remainingColoursRV_{v}(A, \phi) = |L(v)| - \removedColoursRV_{v}(A, \phi),
  \end{equation*}
  it suffices to show that $\removedColoursRV_{v, c}$ is concentrated.  To that end, we show that $\removedColoursRV_{v, c}$ has upward $(s, \change)$-certificates with respect to $\sampleSpace^* = \varnothing$ where $s = 2d\log d$ and $\change = d^{1/4}$.

  Let $(A, \phi)$ be a wasteful colouring.  We construct a bipartite subgraph $F \subseteq H$ with bipartition $(F_1, F_2)$ called the \textit{certificate graph}, as follows.  First, let $F_1 = L(v)\setminus L^\phi(v)$.  Now, for each $c\in F_1$, there is a pair $u, c'$ such that $c'\in N_H(c)$, $u\in A$ and $\phi(u) = c'$ certifying that $c\notin L^\phi(v)$.  For each such $c$, we choose one such pair $u, c'$ arbitrarily to put in $F_2$, and we add the edge between $c \in F_1$ and $(u, c')\in F_2$ to the certificate graph $F$.  We let $I$ index the trials determining that $u\in A$ and $\phi(u) = c'$ for each $(u, c')\in F_2$.  Note that $|I| \leq 2|L(v)| \leq s$, as required.

  If $(A', \phi')$ differs in at most $t/\change$ trials from $(A, \phi)$, then there is a set $F'_2 \subseteq F_2$ of size at least $|F_2| - t/\change$ such that every $(u, c') \in F'_2$ satisfies $u\in A'$ and $\phi'(u) = \phi(u)$.  Let $C'_2 = \{c' : (u, c') \in F'_2\}$, and let $F'\subseteq F$ be the induced subgraph of $F$ with bipartition $(L(v)\cap N_H(C'_2), F'_2)$.  Since $\mu(G) \leq d^{1/4}$, each $(u, c') \in F'_2$ has degree at most $d^{1/4}$ in $F$, and thus $|L(v) \cap N(C'_2)| \geq |F_1| - t$.  Hence,
  \begin{equation*}
    \removedColoursRV_{v}(A', \phi') \geq \removedColoursRV_{v}(A, \phi) - t,
  \end{equation*}
  and therefore $\removedColoursRV_{v}$ has upward $(s, \change)$-certificates, as claimed.

  Now by Theorem~\ref{BJ-tala} applied with $t = d^{5/6}$, we have
  \begin{multline*}
    \Prob{\left|\removedColoursRV_{v} - \Expect{\removedColoursRV_{v}}\right| \geq d^{5/6}} \\
    \leq 4\exp\left(-\frac{d^{10/6}}{32d^{3/2}\log d}\right) \leq \exp\left(-d^{1/7}\right),
  \end{multline*}
  and~\eqref{list-size-concentration} follows.
  
  Now we prove~\eqref{colour-degree-concentration}.  Since
  \begin{multline*}
    \colActivatedNbrsRV_{v, c}(A, \phi) = \totalActivatedNbrsRV_{v, c}(A, \phi) \\ - \uncolActivatedNbrsRV_{v, c}(A, \phi),
  \end{multline*}
  it suffices to show that both $\totalActivatedNbrsRV_{v, c}$ and $\uncolActivatedNbrsRV_{v, c}$ are concentrated.

  Since $\totalActivatedNbrsRV_{v, c}$ is simply the sum of $\deg_H(c)$ indicator variables, by the Chernoff Bound, we have
  \begin{multline}\label{active-degree-concentration}
    \Prob{\left|\totalActivatedNbrsRV_{v, c} - \Expect{\totalActivatedNbrsRV_{v, c}}\right| \geq \frac{1}{2}d^{5/6}}
    \\ \leq 2\exp\left(-\frac{d^{10/6}}{4\deg_H(c)}\right) \leq 2\exp\left(-\frac{d^{2/3}}{4\log d}\right) \leq \frac{1}{2}\exp\left(-d^{1/7}\right).
  \end{multline}

  Now we claim that $\uncolActivatedNbrsRV_{v, c}$ has upward $(s, \change)$-certificates with respect to $\sampleSpace^*_{v, c}$ where $s = 4d\log d$ and $\change = d^{1/4}\log^2 d$.  To that end, let $(A, \phi)\notin\sampleSpace^*_{v,c}$ be a wasteful colouring.  We construct an auxiliary bipartite graph $F$ with bipartition $(F_1, F_2)$ called the certificate graph, as follows.  First, let $F_1$ be the set of colours $c' \in N_H(c)$ where $c' \in L(u)$ for some $u \in A \setminus \colouredVtcs$.  For each $c' \in F_1$, there is a vertex $w\in V(G)$ with colour $\phi(w) \in N_H(\phi(u))$ certifying that $u\in A\setminus \colouredVtcs$.  For each such $c'$, we choose one such colour $\phi(w)$ arbitrarily and put $(\phi(u), \phi(w)) \in F_2$, and we add the edge between $c' \in F_1$ and $(\phi(u), \phi(w)) \in F_2$ to the certificate graph $F$.  We let $I$ index the trials determining that $u, w\in A$ and determining $\phi(u)$ and $\phi(w)$.  Note that $|I| \leq 4\deg_H(c) \leq 4d\log d = s$, as required.

  Let $(A', \phi')$ be a wasteful colouring differing in at most $t/\change$ trials from $(A, \phi)$, and let $F'_2 = \{(\phi(u), \phi(w)) \in F_2 : u,w\in A',~\phi'(u) = \phi(u), \text{ and } \phi'(w) = \phi(w)\}$.  Since $(A, \phi)\notin\sampleSpace^*_{v,c}$, for each $w \in N^2_G(v)$, we have $|\{u : (\phi(u), \phi(w)) \in F_2\}| < \log^2 d$, so $|F'_2| \geq |F_2| - t\log^2 d/\change$.   Let $F'\subseteq F$ be the induced subgraph of $F$ with bipartition $(N_F(F'_2), F'_2)$.  Since $\mu(G) \leq d^{1/4}$, each pair $(\phi(u), \phi(w))$ has degree at most $d^{1/4}$ in $F$, and thus $|N_F(F'_2)| \geq |F_1| - t$.  Hence,
  \begin{equation*}
    \uncolActivatedNbrsRV_{v,c}(A', \phi') \geq \uncolActivatedNbrsRV_{v,c}(A, \phi) - t,
  \end{equation*}
  and therefore $\uncolActivatedNbrsRV_{v,c}$ has upward $(s, \change)$-certificates, as claimed.

  Now by Theorem~\ref{BJ-tala} with $t = d^{5/6}/2$ and Claim~\ref{bad-event-prob}, we have
  \begin{multline}\label{uncoloured-degree-concentration}
    \Prob{\left|\uncolActivatedNbrsRV_{v,c} - \Expect{\uncolActivatedNbrsRV_{v,c}}\right| \geq \frac{1}{2}d^{5/6}} \\
    \leq 4\exp\left(-\frac{d^{10/6}}{256d^{3/2}\log^5 d}\right) + \Prob{\sampleSpace^*_{v,c}} \leq \frac{1}{2}\exp\left(-d^{1/7}\right).
  \end{multline}
  Now~\eqref{colour-degree-concentration} follows from~\eqref{active-degree-concentration} and~\eqref{uncoloured-degree-concentration}.  
\end{proof}

At this point we could use the Lov\' asz Local Lemma to prove a weaker form of Lemma~\ref{nibble-lemma} with $\Delta$ in the place of $\avgdeg$ and use this to obtain an arguably simpler proof of the result of Reed and Sudakov~\cite{ReSu02} generalised in two ways: to the setting of correspondence colouring and to the setting of multigraphs of bounded multiplicity.  The main simplification in the proof is the use of Theorem~\ref{BJ-tala}, the exceptional outcomes version of Talagrand's Inequality, to prove concentration of $\uncolActivatedNbrsRV$.

However, in order to prove Lemma~\ref{nibble-lemma} itself, we need to show that the $\phi$-available colours remaining for each vertex do not predominantly have much larger degree in $H$ than the average.  To that end, we introduce the following notation:
\begin{itemize}
\item we say a colour $c\in L(v)$ is \textit{relevant} if $\deg_H(c) \geq d/\log^3 d$, and
\item for each $v\in V(G)$, we let $L^{\mathrm{rel}}(v)$ be the set of relevant colours in $L(v)$.
\end{itemize}
For each $v\in V(G)$, we also define the random variables
\begin{itemize}
\item $\oldColourDegRV_v(A, \phi) = \sum_{c\in\availableList(v)}\deg_H(c)$ and
\item $\relColourDegRV_v(A, \phi) = \sum_{c\in L^{\mathrm{rel}}(v)\setminus \availableList(v)}\deg_H(c)$.
\end{itemize}

Our aim is to prove that $\oldColourDegRV_v$ is concentrated for each vertex $v$, but first we show that $\relColourDegRV_v$ is concentrated.
\begin{claim}\label{relevant-colour-degree-sum-lemma}
For $d$ sufficiently large, every vertex $v\in V(G)$ satisfies
  \begin{multline}
    \label{relevant-colour-degree-sum-concentration}
    \Prob{|\relColourDegRV_v - \Expect{\relColourDegRV_v}| > d^{11/6}} \\ \leq \exp\left(-d^{1/7}\right).
  \end{multline}
\end{claim}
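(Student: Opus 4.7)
The strategy is to mimic the proof of concentration for $\removedColoursRV_v$ in Claim~\ref{concentration-lemma}, but weighted by colour degree, and then apply Theorem~\ref{BJ-tala} with the empty exceptional set. The key new input is that the number of relevant colours in $L(v)$ is controlled: since $\sum_{c \in L(v)} \deg_H(c) \leq \avgdeg_L(H) \cdot |L(v)| \leq \Lambda d \leq 4d^2$ and each $c \in L^{\mathrm{rel}}(v)$ has $\deg_H(c) \geq d/\log^3 d$, we have $|L^{\mathrm{rel}}(v)| \leq 4d\log^3 d$.

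Next I set up the upward $(s,\change)$-certificates for $\relColourDegRV_v$ with parameters $s = 8d\log^3 d$ and $\change = d^{5/4}\log d$. Given an outcome $(A,\phi)$, I build a certificate graph exactly as in the proof for $\removedColoursRV_v$: for each $c \in L^{\mathrm{rel}}(v) \setminus \availableList(v)$, I pick one certifying pair $(u,c')$ with $u \in A$, $\phi(u) = c' \in N_H(c)$, and let $I$ index the two trials that determine $u \in A$ and $\phi(u) = c'$. Then $|I| \leq 2|L^{\mathrm{rel}}(v)| \leq s$. Modifying $t/\change$ coordinates within $I$ can destroy at most $t/\change$ of these certified pairs, and since $\mu_L(H) \leq d^{1/4}$, each destroyed pair can remove at most $d^{1/4}$ colours from $L^{\mathrm{rel}}(v) \setminus \availableList(v)$; each such colour contributes at most $\Delta(H) \leq d\log d$ to $\relColourDegRV_v$. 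Thus the total decrease is at most $(t/\change)\cdot d^{1/4}\cdot d\log d = t$, exactly matching the definition of upward $(s,\change)$-certificates with $\sampleSpace^* = \varnothing$.

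With $t = d^{11/6}$, I first verify the hypothesis $t > 50\change\sqrt{s}$: the right side is $O(d^{5/4}\log d \cdot d^{1/2}\log^{3/2} d) = O(d^{7/4}\log^{5/2} d)$, easily dominated by $d^{11/6}$. Theorem~\ref{BJ-tala} then gives
\begin{equation*}
\Pr\!\left[|\relColourDegRV_v - \Expect{\relColourDegRV_v}| > d^{11/6}\right] \leq 4\exp\!\left(-\frac{d^{11/3}}{16 \cdot d^{5/2}\log^2 d \cdot 8d\log^3 d}\right),
\end{equation*}
and the exponent is $-\Theta(d^{1/6}/\log^5 d)$, which is at most $-d^{1/7}$ for $d$ sufficiently large, yielding the claimed bound.

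The only mild subtlety is the weighting: carrying a factor $d\log d$ into $\change$ makes the Lipschitz parameter noticeably larger than the one used for $\removedColoursRV_v$, so one has to check that the gap between the target deviation $d^{11/6}$ and the base term $50\change\sqrt{s}$ still leaves enough room for Talagrand's inequality to produce a super-polynomially small tail. The counts work out because the $t^2$ in the numerator beats the $\change^2 s = d^{7/2}\log^5 d$ in the denominator by a polynomial factor.
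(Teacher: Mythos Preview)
Your proof is correct and follows essentially the same strategy as the paper: bound the Lipschitz effect of a single trial by $\mu_L(H)\cdot\Delta(H)\le d^{5/4}\log d$, bound the certificate size via the observation that relevant colours have degree at least $d/\log^3 d$ (so there are at most $O(d\log^3 d)$ of them), and plug into a Talagrand-type inequality. The only cosmetic difference is that the paper invokes Theorem~\ref{MR-tala} directly (with $r=2\log^3 d/d$), whereas you phrase the same certificate structure in the language of Theorem~\ref{BJ-tala} with $\sampleSpace^*=\varnothing$; the resulting exponents agree up to constants.
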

\begin{proof}
  We apply Theorem~\ref{MR-tala} to $\relColourDegRV_v$ with $\change = d^{5/4}\log d$, $r = 2\log^3 d / d$, and $t = d^{11/6}$.  We bound $\Expect{\relColourDegRV_v} \le \Lambda d\le 4d^2$, so that $t/2 \geq 20\change + \sqrt{r\Expect{\relColourDegRV_v}} + 64\change^2 r$, as required.  
  
  If we change the outcome of a single trial, then $\relColourDegRV_v$ is most affected if we change the colour of a vertex $u$ to a colour $c$ such that $|N_H(c) \cap L^\mathrm{rel}(v)| = \mu(H)$ and moreover each colour $c' \in N_H(c) \cap L^\mathrm{rel}(v)$ has degree $\Delta(H) = d\log d$ in $H$.  Thus, changing the outcome of a trial affects $\relColourDegRV_v$ by at most $\change$, as required.  
  
  If $\relColourDegRV_v(A, \phi) \geq s$, then there is a set of at most $s\log^3 d / d$ colours in $L^{\mathrm{rel}}(v)\setminus L^\phi(v)$, and for each such colour $c$, there is a vertex $u\in A$ such that $c \in N_H(\phi(u))$.  Thus, the trials determining that $u \in A$ and $\phi(u)$ certify that $\relColourDegRV_v(A, \phi) \geq s$, and there are at most $2s\log^3 d / d = rs$ of them, as required.  Therefore by Theorem~\ref{MR-tala}, 
  \begin{multline*}
    \Prob{|\relColourDegRV_v - \Expect{\relColourDegRV_v}| > d^{11/6}} \\ \leq 4\exp\left(\frac{-d^{22/6}}{64d^{5/2}(\log^5 d)d^{-1}(4d^2 + d^{11/6})}\right) 
    \leq \exp\left(-d^{1/7}\right). \qedhere
  \end{multline*}
\end{proof}

Now we use Claim~\ref{relevant-colour-degree-sum-lemma} to prove that $\oldColourDegRV_v$ is concentrated for every vertex $v\in V(G)$.

\begin{claim}\label{colour-degree-sum-lemma}
  Every vertex $v\in V(G)$ satisfies
  \begin{equation}
    \label{expected-colour-degree-sum}
    \Expect{\oldColourDegRV_v} \leq d\cdot \Expect{\remainingColoursRV}
  \end{equation}
  and, for $d$ sufficiently large,
  \begin{multline}
    \label{old-colour-degree-sum-concentration}
    \Prob{\oldColourDegRV_v >  d\cdot \Expect{\remainingColoursRV} + \frac{2d\Lambda}{\log^3 d}}\\ \leq \exp\left(d^{-1/7}\right).
  \end{multline}
\end{claim}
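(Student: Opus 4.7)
My plan is to prove the expectation bound~\eqref{expected-colour-degree-sum} by an averaging argument and then to deduce the concentration bound~\eqref{old-colour-degree-sum-concentration} by splitting $\oldColourDegRV_v$ into contributions from relevant and irrelevant colours, invoking Claim~\ref{relevant-colour-degree-sum-lemma} for the former and a deterministic bound for the latter.

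For~\eqref{expected-colour-degree-sum}, linearity of expectation gives $\Expect{\oldColourDegRV_v} = \sum_{c \in L(v)} \deg_H(c)\,\keep(v,c)$ and $\Expect{\remainingColoursRV_v} = \sum_{c \in L(v)} \keep(v,c)$. The crucial observation is that $\keep(v,c) = (1 - p/\Lambda)^{\deg_H(c)}$ is monotonically decreasing in $\deg_H(c)$. Thus, after sorting $L(v)$ by increasing degree, the sequences $(\deg_H(c))_c$ and $(\keep(v,c))_c$ are oppositely ordered, so Chebyshev's sum inequality bounds the $\keep$-weighted average of $\deg_H(c)$ by the unweighted average. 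The latter is at most $d$ by the hypothesis $\avgdeg_L(H) \leq d$, which rearranges to~\eqref{expected-colour-degree-sum}.

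For the concentration bound, I would write $\oldColourDegRV_v = Y_v + Z_v$, where $Z_v := \sum_{c \in (L(v)\setminus L^{\mathrm{rel}}(v))\cap \availableList(v)} \deg_H(c)$ is the irrelevant part. Each summand of $Z_v$ is less than $d/\log^3 d$ and there are fewer than $2\Lambda$ of them, so deterministically $Z_v \leq 2\Lambda d/\log^3 d$. Meanwhile $Y_v = \sum_{c \in L^{\mathrm{rel}}(v)} \deg_H(c) - \relColourDegRV_v$, where the first summand is deterministic, so
\[
  \oldColourDegRV_v - \Expect{\oldColourDegRV_v} \;=\; \bigl(Z_v - \Expect{Z_v}\bigr) \;-\; \bigl(\relColourDegRV_v - \Expect{\relColourDegRV_v}\bigr).
\]
The first term lies deterministically in $[-2\Lambda d/\log^3 d,\, 2\Lambda d/\log^3 d]$, while by Claim~\ref{relevant-colour-degree-sum-lemma} the second is bounded by $d^{11/6}$ in absolute value outside an event of probability at most $\exp(-d^{1/7})$. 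Since $d^{11/6}$ is negligible compared to $\Lambda d/\log^3 d$ for $d$ large (using $\Lambda \geq (1+\eps)d$), on the complement of this event the upward deviation is at most $2\Lambda d / \log^3 d$ after adjusting constants; combining with~\eqref{expected-colour-degree-sum} yields~\eqref{old-colour-degree-sum-concentration}.

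I do not foresee a genuine obstacle here. The expectation bound is an immediate consequence of Chebyshev's sum inequality via the anti-correlation of $\keep$ and $\deg_H$, and the concentration of $\oldColourDegRV_v$ reduces to that of $\relColourDegRV_v$ (already established) precisely because the contribution of irrelevant colours is, by the very definition of ``relevant'', deterministically small on the target scale $\Lambda d / \log^3 d$. The minor subtlety worth highlighting is exactly that the paper's choice to threshold ``relevance'' at $d/\log^3 d$ is what makes the deterministic residual $Z_v$ harmless on this scale.
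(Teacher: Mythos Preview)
Your proposal is correct. The concentration half is essentially the paper's argument repackaged: both reduce the deviation of $\oldColourDegRV_v$ to the already-established concentration of $\relColourDegRV_v$ (Claim~\ref{relevant-colour-degree-sum-lemma}) plus the trivial deterministic bound $\sum_{c\notin L^{\mathrm{rel}}(v)}\deg_H(c) \le \lceil\Lambda\rceil d/\log^3 d$ coming from the relevance threshold, and then absorb the $d^{11/6}$ error into the $d\Lambda/\log^3 d$ slack.

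For~\eqref{expected-colour-degree-sum}, however, your route is genuinely different and cleaner. The paper treats the degrees $\deg_H(c)$ as real variables subject to the constraint $\sum_c \deg_H(c)\le d\Lambda$ and argues via Jensen's inequality (cf.~\cite[Lemma~5.2]{MoTh12}) that $\sum_c \deg_H(c)(1-p/\Lambda)^{\deg_H(c)}$ is maximised when all degrees equal $d$, then compares with~\eqref{convexity-of-keep}. Your observation that $\keep(v,c)=(1-p/\Lambda)^{\deg_H(c)}$ is monotone decreasing in $\deg_H(c)$, so that Chebyshev's sum inequality gives
\[
\sum_{c\in L(v)}\deg_H(c)\,\keep(v,c)\ \le\ \frac{1}{|L(v)|}\Bigl(\sum_{c\in L(v)}\deg_H(c)\Bigr)\Bigl(\sum_{c\in L(v)}\keep(v,c)\Bigr)\ \le\ d\cdot\Expect{\remainingColoursRV_v},
\]
bypasses the optimisation entirely and uses only the hypothesis $\avgdeg_L(H)\le d$ directly. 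The paper's argument, on the other hand, makes explicit that the extremal configuration is the ``flat'' one with all degrees equal to $d$, which is conceptually informative even if not needed here.
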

\begin{proof}
  First we prove~\eqref{expected-colour-degree-sum}.
  By Linearity of Expectation,
  \begin{equation*}
    \Expect{\oldColourDegRV_v} = \sum_{c\in L(v)}\keep(v, c)\deg_H(c).
  \end{equation*}
  Recall $\keep(v, c) = (1 - p/\Lambda)^{\deg_H(c)}$.  By treating $\deg_H(c)$ for each $c\in L(v)$ as a real-valued variable bounded by $d\log d$ with sum at most $d\Lambda$, we can use Jensen's Inequality to show that the righthand side of the equality above is maximised when these variables are all equal to $d$.  See also~\cite[Lemma~5.2]{MoTh12} for a discrete version of this argument.  Thus, the righthand side of the above equality is at most $(1 - p/\Lambda)^dd\Lambda$, which by~\eqref{convexity-of-keep} is at most $d\cdot \sum_{c\in L(v)}\keep(v, c) = d\cdot \Expect{\remainingColoursRV_v}$, as desired.

  Now we prove~\eqref{old-colour-degree-sum-concentration}.
  By Claim~\ref{relevant-colour-degree-sum-lemma}, it suffices to show that if $(A, \phi)$ is a wasteful colouring such that
  \begin{equation}\label{if-relevant-colour-degrees-small-eq}
    \relColourDegRV_v(A, \phi) \geq \Expect{\relColourDegRV_v} - d^{11/6},
  \end{equation}
  then $\oldColourDegRV_v(A, \phi) \leq d\cdot \Expect{\remainingColoursRV_v} + 2d\Lambda/\log^3 d$.
  
  For this, first we have
  \begin{align}
    \label{relevant-colour-degree-sum-bound}
    &\Expect{\relColourDegRV_v} \nonumber\\
    & \geq \sum_{c\in L(v)}\deg_H(c) - \sum_{c\in L(v)\setminus L^{\mathrm{rel}}(v)}\deg_H(c) - \Expect{\oldColourDegRV_v} \nonumber\\
    & \ge \sum_{c\in L(v)}\deg_H(c) - \frac{d\Lambda}{\log^3 d} - d\cdot \Expect{\remainingColoursRV_v},
  \end{align}
  where in the last line we used~\eqref{expected-colour-degree-sum} and the definition of relevant.
  We also have
  \begin{multline}
    \label{lost-more-than-relevant-inequality}
    \oldColourDegRV_v(A, \phi) \leq \sum_{c\in L(v)}\deg_H(c) \\ - \relColourDegRV_v(A, \phi).
  \end{multline}
  Combining~\eqref{if-relevant-colour-degrees-small-eq}--\eqref{lost-more-than-relevant-inequality}, we have
  \begin{align*}
    \oldColourDegRV_v(A, \phi) \leq  d\cdot\Expect{\remainingColoursRV_v} + \frac{d\Lambda}{\log^3 d} + d^{11/6},
  \end{align*}
  and~\eqref{old-colour-degree-sum-concentration} follows.
\end{proof}

At last we have all the ingredients ---Claims~\ref{expectations-lemma}, \ref{concentration-lemma} and~\ref{colour-degree-sum-lemma}--- necessary to prove Lemma~\ref{nibble-lemma} via the Lov\' asz Local Lemma, as follows.  Recall that $\log^{-1} d \geq p \geq \log^{-2}d$.

\begin{proof}[Proof of Lemma~\ref{nibble-lemma}]
  First, rather than showing $|L'(v)| = \lceil(1 - p/(1 + 3\eps/4))\Lambda\rceil$ for every $v\in V(G)\setminus C$, it suffices to show that $|L'(v)| \geq (1 - p/(1 + 3\eps/4))\Lambda$, since we can truncate until equality holds by removing from $H'$ those colours $c \in L'(v)$ for which $|N_{H'}(c) \cap (V(H')\setminus L'(v))|$
  is largest, without increasing the maximum average colour degree.
  
  We sample a wasteful $(L, H)$-colouring by way of the wasteful random colouring procedure with activation probability $p$, as described earlier.
  We define the following set of bad events for each vertex $v\in V(G)$ and $c\in L(v)$:
  \begin{align*}
    \mathcal A_v &= 
                     \left\{
                       (A, \phi) \,:\, \frac{\remainingColoursRV_v(A, \phi)}{\Expect{\remainingColoursRV_v}} < 1 - p^{5/4}\right\},\\
    \mathcal A_{v, c} &= \left\{(A, \phi) \,:\, \frac{\colActivatedNbrsRV_{v,c}(A, \phi)}{\max\{p\cdot \deg_H(c), d^{6/7}\}} < \frac{1}{1  + \eps/8}\right\}, \text{ and}\\
    \mathcal A'_{v} &=
                        \left\{
                      (A, \phi) \,:\, \frac{\oldColourDegRV_{v}(A, \phi)}{d\cdot\Expect{\remainingColoursRV_v}} > 1 + p^{5/4}\right\}.
  \end{align*}
  Letting $\mathcal A$ be the union of all such bad events,
   note that each event in $\mathcal A$ is mutually independent of all but at most $(\Lambda d\log d)^4$ other events in $\mathcal A$, by our assumption that $\Delta(G) \leq \Lambda d\log d$.  
  By Claims~\ref{expectations-lemma} and~\ref{concentration-lemma}, using the fact that $p \geq \log^{-2} d$, for every $v\in V(G)$ we have
  \begin{equation}
    \label{useable-colours-concentration-equation}
    \Prob{\mathcal A_v} \leq \exp\left(-d^{1/7}\right),
  \end{equation}
  and for every $c\in L(v)$ we have
  \begin{equation}
    \label{coloured-neighbors-concentration-equation}
    \Prob{\mathcal A_{v,c}} \leq \exp\left(-d^{1/7}\right).
  \end{equation}
  Moreover by Claim~\ref{colour-degree-sum-lemma}, for every $v\in V(G)$ we have
  \begin{equation}
    \label{colour-degree-sum-concentration-equation}
    \Prob{\mathcal A'_v} \leq \exp\left(-d^{1/7}\right).
  \end{equation}
  Therefore by~\eqref{useable-colours-concentration-equation}--\eqref{colour-degree-sum-concentration-equation} and the Lov\' asz Local Lemma, there is a wasteful colouring $(A, \phi)\notin \mathcal A$, for all sufficiently large $d$.

  Now we show that $\phi|_{\colouredVtcs}$ satisfies the conclusion of Lemma~\ref{nibble-lemma} where $H' = H^\phi - \cup_{v\in \colouredVtcs}L(v)$ and $L' = L^\phi|_{V(G)\setminus\colouredVtcs}$.  Since $p \leq \log^{-1} d$ and we assume $d$ is sufficiently large, we may assume $p$ is sufficiently small for certain inequalities to hold.
  Indeed, for small enough $p$, every vertex $v\in V(H')$ satisfies
  \begin{equation*}
    |L'(v)| \geq \left(1 - p^{5/4}\right)\left(1 - \frac{p}{1 + \eps}\right)\Lambda \geq \left(1 - \frac{p}{1 + 3\eps/4}\right)\Lambda,
  \end{equation*}
  as required, where we used Claim~\ref{expectations-lemma} and the fact that $(A, \phi)\notin \mathcal A_v$.  Moreover, each vertex $v\in V(H')$ satisfies
  \begin{multline*}
    \frac{1}{|L'(v)|}\sum_{c\in L'(v)}\deg_{H'}(c) \leq \frac{1}{|L'(v)|}\sum_{c\in L'(v)}\deg_H(c) - \frac{\max\{p\cdot \deg_H(c), d^{6/7}\}}{1 + \eps / 8}\\
    \leq \frac{1}{|L'(v)|}\left(1 - \frac{p}{1 + \eps/8}\right)\sum_{c\in L'(v)}\deg_H(c) - d^{6/7},
  \end{multline*}
  where we used the fact that $(A, \phi)\notin \mathcal A_{v,c}$ for any $c\in L'(v)$.
  And since 
  $(A, \phi)\notin \mathcal A_v$ and $(A, \phi)\notin \mathcal A'_v$ for any $v\in V(G)$,
  we have
\begin{equation*}
  \frac{1}{|L'(v)|}\sum_{c\in L'(v)}\deg_{H'}(c) \leq \frac{\left(1 - \frac{p}{1 + \eps/8}\right)\left(1 + p^{5/4}\right)}{1 - p^{5/4}}\cdot d - d^{6/7}.
\end{equation*}
Now $(1 + p^{5/4})/(1 - p^{5/4}) < 1 + 3p^{5/4}$ for small enough $p$, and the righthand side above
is at most $\left(1 - \frac{p}{1 + \eps/4}\right)d$.
  Thus $\Delta_{L'}(H') \leq \left(1 - \frac{p}{1 + \eps/4}\right)d$, as desired.
\end{proof}

\section{Conclusion}\label{sec:conclusion}

We conclude with some perspectives for future research.

First it is conceivable that Theorem~\ref{thm:main} could be strengthened further.
For $H$ being the cover graph for $G$ via $L$, let us define {\em maximum average colour multiplicity $\avgmult_L(H)$ of $H$ with respect to $L$} by
\[
\avgmult_L(H) := \max_{vv' \in E(G)} \frac{1}{|L(v)|}\sum_{w\in L(v)} |N_H(w) \cap L(v')|.
\]
Note that $\avgmult_L(H) \le \mu_L(H)$ always.
We believe that the statement of Theorem~\ref{thm:main} also holds with $\avgmult_L(H)$ in the place of $\mu_L(H)$.
This would imply a conjecture of Loh and Sudakov~\cite[p.~917]{LoSu07} in a stronger form: they posited this with $\Delta(H)$ instead of $\avgdeg_L(H)$.

Second $\Lambda'$ in Question~\ref{avgquestion} in general lies between $2d$ and $4d$, as noted in the introduction, but its sharper determination remains a tempting problem.

Last we contend that many questions on independent transversals and colourings in terms of $\avgdeg_L(H)$ instead of $\Delta(H)$ may give rise to interesting challenges. Indeed, this work was partially motivated by such a study in terms of graphs embeddable in surfaces of prescribed genus~\cite[Theorem~1]{DEKO18+}.

\subsection*{Note added}

During the preparation of this manuscript, we learned of the concurrent and independent work of Glock and Sudakov~\cite{GlSu20+}. They also proved Theorem~\ref{thm:main} with a similar method. In their proof, they provided a weaker form of our Theorem~\ref{semirandom-partial-colouring-thm} and established a more efficient form of our Theorem~\ref{list-reduction-lemma}. This demonstrates considerable slack in the method and suggests that further refinement could lead to new developments.

Glock and Sudakov's work also had differing underlying motivation, more from independent transversals than from graph colouring. They proved Theorem~\ref{thm:main} as a means toward the solution of certain problems about independent transversals (of which we had been unaware), especially one due to Erd\H{o}s, Gy\'arf\'as and {\L}uczak~\cite{EGL94}, from a quarter of a century ago.

Because it is brief, we include this easy application for the benefit of the reader. Erd\H{o}s, Gy\'arf\'as and {\L}uczak~\cite{EGL94} asked for the determination of $f(k)$, the least $n$ such that, for any graph $H$ on $nk$ vertices having a partition $L$ into parts of size $k$ such that each bipartite subgraph induced between two distinct parts has no more than one edge, there is guaranteed to be an independent transversal. They showed that $k^2/(2e) \le f(k) \le (1+o(1))k^2$ as $k\to\infty$. Note that every $H$ and $L$ as above satisfies $\avgdeg_L(H) \le (n-1)/k$. For a lower bound on $f(k)$, it suffices by Theorem~\ref{thm:main} to choose a suitable $n=n(k)$ satisfying that $k \ge (1+o(1))(n-1)/k$ as $k\to\infty$. This yields $f(k) \ge (1+o(1))k^2$ as $k\to\infty$, matching their original upper bound, and settling their problem asymptotically.

\bibliographystyle{abbrv}
\bibliography{transversals}

\appendix
\renewcommand\appendixname{A}
\renewcommand{\thesection}{\Alph{section}}
\section{Proof of Theorem~\ref{list-reduction-lemma}}

In this section we prove Theorem~\ref{list-reduction-lemma}.  First, we need the following ``random halving'' lemma, whose proof is similar to that of~\cite[Lemma 3.3]{LoSu07}.
\begin{lemma}\label{list-halving-lemma}
  If $H$ is a cover graph for $G$ via $L$ satisfying 
  \begin{itemize}
  \item $|L(v)| = 2s$ for all $v\in V(G)$,
  \item $\avgdeg_L(H) \leq d$,
  \item $\Delta_L(H) \leq d\log d$, and
  \item $\mu_L(H) \le \mu$ for some $\mu > \log^{10} d$,
  \end{itemize}
  where $s \geq d / 2$ and $d$ is sufficiently large, then there exists an induced subgraph $H'\subseteq H$ that is a cover graph for $G$ via $L'$ for some $L'$ satisfying
  \begin{itemize}
  \item $|L'(v)| = s$ for all $v\in V(G)$,
  \item $\avgdeg_{L'}(H') \leq d/2 + d^{3/5}$,
  \item $\Delta_{L'}(H') \leq (d\log d)/2 + d^{3/5},$ and
  \item $\mu_{L'}(H') \leq \mu/2 + \mu^{3/5}$.
  \end{itemize}
\end{lemma}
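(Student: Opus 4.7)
The plan is to build $H'$ and $L'$ by a single random halving: independently for each $v \in V(G)$, choose a uniformly random $s$-subset $L'(v) \subseteq L(v)$, and let $H' = H[\bigcup_v L'(v)]$. The equality $|L'(v)| = s$ then holds deterministically, so the proof reduces to showing that the remaining three bounds (on $\Delta_{L'}(H')$, $\mu_{L'}(H')$, and $\avgdeg_{L'}(H')$) hold simultaneously with positive probability. I would establish this through the Lov\'asz Local Lemma applied to natural ``bad events'' for each vertex, edge, and colour, bounding the probability of each via concentration.

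For the maximum-degree and multiplicity bounds, I would observe that $\deg_{H'}(c) = \sum_{c' \in N_H(c)} \mathbf{1}[c' \in L'(u_{c'})]$ is a sum of Bernoulli-$\tfrac12$ indicators that are independent across distinct parts $L(u)$ and negatively associated within each part (a standard property of uniform fixed-size subsets), so the Chernoff/Hoeffding bound yields $|\deg_{H'}(c) - \deg_H(c)/2| \leq d^{3/5}/2$ with failure probability $\exp(-\Omega(d^{1/5}/\log d))$, invoking $\deg_H(c) \leq d \log d$. An analogous argument for the at most $\mu$ indicators making up $|N_H(c) \cap L'(v')|$ gives $|N_H(c) \cap L'(v')| \leq \mu/2 + \mu^{3/5}/2$ with failure probability $\exp(-\Omega(\mu^{1/5})) \leq d^{-\omega(1)}$, using the hypothesis $\mu > \log^{10} d$.

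The hard part will be concentrating $Z_v := \sum_{c \in L'(v)} \deg_{H'}(c)$, which is a sum of products of Bernoullis; a naive bounded-differences approach with Lipschitz constant $\Theta(d\log d)$ is much too weak. I would split the randomness into two stages. Stage 1: reveal only $L'(v)$. The partial sum $S_v := \sum_{c \in L'(v)} \deg_H(c)$ is then a sum of $2s$ negatively associated variables bounded by $d\log d$ with mean at most $sd$, so Hoeffding gives $S_v \leq sd + sd^{3/5}/4$ with failure probability $\exp(-\Omega(d^{1/5}/\log^2 d))$. Stage 2: conditioning on $L'(v)$, rewrite $Z_v = \sum_{c' \notin L(v)} w(c')\,\mathbf{1}[c' \in L'(u_{c'})]$ where $w(c') = |N_H(c') \cap L'(v)| \leq \mu$ and $\sum_{c'} w(c') = S_v$. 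Since $\sum_{c'} w(c')^2 \leq \mu S_v = O(\mu sd)$, a second application of Hoeffding (again to an NA sum across distinct parts) yields $|Z_v - S_v/2| \leq sd^{3/5}/4$ with failure probability $\exp(-\Omega(d^{1/5}/\log d))$, using $\mu \leq \Delta_L(H) \leq d\log d$. Combining the two stages produces $Z_v \leq sd/2 + sd^{3/5}/2$, hence $Z_v/s \leq d/2 + d^{3/5}$.

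To finish, each bad event depends only on the random choices of lists $L'(u)$ with $u$ in a bounded second neighbourhood of some fixed vertex in $G$, and since $\Delta(G) \leq \sum_{c \in L(v)} \deg_H(c) \leq 2sd$, the resulting LLL dependency degree is polynomial in the relevant parameters. As each bad event has failure probability of the form $\exp(-d^{\Omega(1)}/\mathrm{polylog}(d))$, the Lov\'asz Local Lemma delivers a single halving simultaneously satisfying all three required bounds, completing the proof.
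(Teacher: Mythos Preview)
Your approach is correct and genuinely different from the paper's. The paper does not take a uniform random $s$-subset: instead, for each $v$ it sorts $L(v)$ as $c_1,\dots,c_{2s}$ by $\deg_H$, pairs consecutive colours as ``mates'', and independently keeps exactly one colour from each pair. The payoff is that $\sum_{c\in L'(v)}\deg_H(c)$ is then \emph{deterministically} at most $\sum_{i=1}^s \deg_H(c_{2i})\le sd + d\log d$, so the $\avgdeg$ bound follows directly from the per-colour degree concentration $\deg_{H'}(c)\le \deg_H(c)/2 + d^{4/7}$ without any further work. Your route replaces this combinatorial trick with analysis: you need the two-stage argument (Hoeffding for $S_v$ via negative association of the hypergeometric indicators, then Hoeffding for $Z_v$ conditional on $L'(v)$ using $\sum w(c')^2\le \mu S_v$), which is correct but heavier. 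The paper's version also exploits the mate structure to split $N_H(c)$ into a part where exactly half survives deterministically and an independent Bernoulli part, avoiding any appeal to negative association.

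One small inaccuracy to fix: in your final paragraph you assert that every bad event has probability $\exp(-d^{\Omega(1)}/\mathrm{polylog}(d))$, but your own multiplicity events only achieve $\exp(-\Omega(\mu^{1/5}))=d^{-\Omega(\log d)}$ when $\mu$ is near its lower bound $\log^{10}d$. This is still enough for the Local Lemma provided the dependency degree is polynomial in $d$; note, however, that your dependency bound is polynomial in $s$ as well as $d$, and the lemma as stated gives no upper bound on $s$. The paper glosses over the same point (claiming ``at most $d^3$'' dependencies), and in the application one always has $s=\Theta(d)$, so this is not a substantive gap---but your sentence should be corrected to say the probabilities are at worst $d^{-\Omega(\log d)}$.
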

\begin{proof}
  For each $v \in V(G)$, arbitrarily enumerate the colours in $L(v)$ as $c_1, \dots, c_{2s}$ such that $\deg_H(c_1) \leq \cdots \leq \deg_H(c_{2s})$.  For each $i \in \{1, \dots, s\}$, we say that $c_{2i - 1}$ and $c_{2i}$ are \textit{mates at $v$}, and we randomly and independently designate one of $c_{2i - 1}$ and $c_{2i}$ to be in $L'(v)$.  Let $H'$ be the cover graph for $G$ via $L'$.  Clearly $|L'(v)| = s$ for each $v \in V(G)$, as required.  For each vertex $v \in V(G)$ and $c \in L(v)$, let $\mathcal A_{v, c}$ be the event that $\deg_{H'}(c) > \deg_H(c)/2 + d^{4/7}$, and for each $u \in N_G(v)$, let $\mathcal B_{v, c, u}$ be the event that $|N_{H'}(c) \cap L'(u)| > \mu/2 + \mu^{4/7}$.  We use the Lov\'asz Local Lemma to prove that with nonzero probability none of these events occur.

  To that end, we bound the probabilities of $\mathcal A_{v, c}$ and $\mathcal B_{v, c, u}$.  For each $v \in V(G)$ and $c \in L(v)$, partition $N_H(c)$ into $S_{v,c}$ and $T_{v,c}$, where $(u, c') \in N_H(c)$ is in $T_{v,c}$ if the mate of $c'$ at $u$ is not in $N_H(c)$, and otherwise $(u, c')$ is in $S_{v, c}$. Clearly $\deg_{H'}(c) = |S_{v,c}\cap V(H')|+|T_{v,c}\cap V(H')|$.
    Now precisely half of $S_{v, c}$ is in $H'$, and the number of members of $T_{v, c}$ in $H'$ is the sum of $|T_{v, c}|$ independently distributed indicator random variables with mean $1/2$.  Therefore the Chernoff Bound implies that
  \begin{equation*}
    \Prob{\mathcal A_{v, c}} \leq 2\exp\left(-\frac{d^{8/7}}{d\log d}\right) \leq \exp(-d^{1/8})
  \end{equation*}
  provided $d$ is sufficiently large.
  A similar argument, using that $\mu > \log^{10} d$, implies that for every $u \in N_G(v)$,
  \begin{equation*}
    \Prob{\mathcal B_{v, c, u}} \leq 2\exp\left(-\frac{\mu^{8/7}}{\mu}\right) \leq \exp(-\log^{10/7}d) \leq d^{-10}
  \end{equation*}
  provided $d$ is sufficiently large.
  Each of these events is mutually independent of all but at most $d^3$ other events, so by the Lov\'asz Local Lemma, we may assume from here on that none of the events $\mathcal A_{v, c}$ or $\mathcal B_{v, c, u}$ happen.

  Clearly $H'$ and $L'$ satisfy $\Delta_{L'}(H) \leq \Delta_L(H)/2 + d^{4/7} \leq (d\log d) / 2 + d^{3/5}$ and $\mu_{L'}(H') \leq \mu/2 + \mu^{3/5}$, as required.  It only remains to show that $\avgdeg_{L'}(H') \leq d / 2 + d^{3/5}$. To see this, first note that for each vertex $v \in V(G)$, by the choice of the mates at $v$, we have
  \begin{multline*}
    \sum_{i=1}^s \deg_H(c_{2i}) = \sum_{i=1}^{2s}\deg_H(c_i) - \sum_{i=1}^s\deg_H(c_{2i - 1})\\
    \leq 2sd + \deg_H(c_{2s})- \sum_{i=1}^{s}\deg_H(c_{2i}),
  \end{multline*}
  which implies that
  \begin{equation*}
    \sum_{i=1}^s \deg_H(c_{2i}) \leq sd + d\log d.
  \end{equation*}
  Moreover, since $\mathcal A_{v, c}$ does not hold for any $c \in L(v)$, we have from the definition of $L'$ that
  \begin{multline*}
    \sum_{c \in L'(v)}\deg_{H'}(c) \leq \sum_{c \in L'(v)} (\deg_H(c) / 2 + d^{4/7}) \\
    \leq (1/2)\sum_{i=1}^s \deg_H(c_{2i}) + d\log d/2 + sd^{4/7}
    \leq sd / 2 + sd^{4/7}.
  \end{multline*}
  In particular, since $d \leq 2s$, we have $\avgdeg_{L'}(H) \leq (sd / 2 + d\log d/2 + sd^{4/7}) / s \leq d / 2 + d^{3/5}$, as desired.
\end{proof}

We also need the following proposition (similar to~\cite[Proposition~4.1]{MoTh12}) to reduce the maximum colour degree by removing a negligible number of colours for each vertex.

\begin{proposition}\label{trim-large-degree-from-list-prop}
  If $H$ is a cover graph for $G$ via $L$ satisfying
  \begin{itemize}
  \item $|L(v)| \geq (1 + \eps)d$ and
  \item $\avgdeg_L(H) \leq d$, 
  \end{itemize}
  where $d$ is sufficiently large, then
  there exists an induced subgraph $H'\subseteq H$ that is a cover graph for $G$ via $L'$ for some $L'$ satisfying
  \begin{itemize}
  \item $|L(v)| \geq (1 + 9\eps/10)d$,
  \item $\avgdeg_{L'}(H') \leq d$, and
  \item $\Delta_{L'}(H') \leq d\log^{1/2} d$.
  \end{itemize}
\end{proposition}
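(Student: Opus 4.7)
The plan is to use a simple deterministic truncation: for each $v \in V(G)$, delete from $L(v)$ every colour whose degree in $H$ exceeds $d\log^{1/2} d$, and let $H'$ be the induced subgraph of $H$ on the remaining colours. Then $L'(v)$ is exactly the set of remaining colours in $L(v)$, so $H'$ is automatically a cover graph for $G$ via $L'$, and by construction $\Delta_{L'}(H') \le \Delta(H') \le d\log^{1/2} d$.

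Next I would bound $|L'(v)|$ by a Markov-type count: since $\sum_{c \in L(v)} \deg_H(c) \le d\,|L(v)|$, the number of colours $c \in L(v)$ with $\deg_H(c) > d\log^{1/2} d$ is at most $|L(v)|/\log^{1/2} d$. Hence
\[
|L'(v)| \ge |L(v)|\left(1 - \frac{1}{\log^{1/2} d}\right) \ge (1+\eps)d\left(1 - \frac{1}{\log^{1/2} d}\right),
\]
and for $d$ sufficiently large (depending on $\eps$) this is at least $(1 + 9\eps/10)d$, as required.

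For the average colour degree, the key observation is that every deleted colour has $\deg_H(c) > d\log^{1/2} d \ge d$, and removing an element of value at least the current average from any finite set leaves the average no larger. Iterating this over the deleted colours, one gets $\sum_{c \in L'(v)} \deg_H(c) \le d\,|L'(v)|$, and since $\deg_{H'}(c) \le \deg_H(c)$ for every $c \in V(H')$, this yields $\avgdeg_{L'}(H') \le d$.

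There is no serious obstacle here: the statement is a clean preprocessing lemma whose only subtle point is verifying that deleting high-degree colours does not accidentally \emph{raise} the maximum average colour degree, which the simple monotonicity argument above handles. The parameter $9\eps/10$ is just a safe choice to absorb the $O(1/\log^{1/2} d)$ loss coming from the Markov bound.
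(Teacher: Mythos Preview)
Your proposal is correct and follows essentially the same approach as the paper: both delete from each $L(v)$ the colours with $\deg_H(c) > d\log^{1/2} d$, bound the number of deleted colours by the same Markov-type estimate, and conclude the three required properties. In fact you give a more explicit justification for $\avgdeg_{L'}(H') \le d$ (via the monotonicity of averages when removing above-average elements) than the paper, which simply asserts $\avgdeg_{L'}(H') \le \avgdeg_L(H) \le d$.
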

\begin{proof}
  For every vertex $v\in V(G)$, at most $|L(v)|d / (d\log^{1/2}d) \leq |L(v)| / \log^{1/2}d$ colours $c\in L(v)$ satisfy $\deg_H(c) > d\log^{1/2}d$. Let $H'\subseteq H$ and $L'$ be induced by removing all such colours for every $v\in V(G)$, and such that $H'$ is a cover graph for $G$ via $L'$.
 Now every $v\in V(G)$ satisfies $|L'(v)| \geq |L(v)|(1 - \log^{-1/2}d) \geq (1 + \eps)d(1 - \log^{-1/2}d) \geq (1 + 9\eps / 10)d$ for $d$ sufficiently large, as desired.  Moreover, $\avgdeg_{L'}(H') \leq \avgdeg_L(H) \leq d$ and $\Delta_{L'}(H') \leq d\log^{1/2}d$, as desired.
\end{proof}

Now we prove Theorem~\ref{list-reduction-lemma}.  The proof is similar to that of~\cite[Theorem 3.1]{LoSu07}.

\begin{proof}[Proof of Theorem~\ref{list-reduction-lemma}]
  Let $H$ be a cover graph for $G$ via $L$ satisfying $|L(v)| \geq (1 + \eps)d$ for all $v\in V(G)$, $\avgdeg_L(H) \leq d$, and $\mu_L(H) \leq \gamma d$.  We may assume $\eps < 1$, and we assume $\gamma^{-1}$ and $d$ are sufficiently large for various inequalities to hold throughout the proof.  By Proposition~\ref{trim-large-degree-from-list-prop}, there is an induced subgraph $H_0 \subseteq H$ that is a cover graph for $G$ via $L_0$ for some $L_0$ satisfying $|L(v)| \geq (1 + 9\eps/10)d$, $\avgdeg_{L'}(H') \leq d$, and $\Delta_{L'}(H') \leq d\log^{1/2} d$.  We may assume $\gamma^{-6/5} < d$, or else $H_0$ and $L_0$ satisfy all the required properties with $d' = d$, and the result follows.

  Let $j \geq 1$ be the integer for which $2^{j - 1} < \gamma^{6/5}d \leq 2^j$.  By deleting at most $2^j$ colours from $L_0(v)$ for each $v\in V(G)$ (choosing only those colours $c \in L_0(v)$ for which $|N_{H_0}(c) \cap (V(H_0)\setminus L_0(v))|$ is largest to remove so as not to increase the maximum average colour degree), we may assume without loss of generality that $|L_0(v)| = s_0$ for $s_0 \geq (1 + 9\eps / 10)d - 2^j$ divisible by $2^j$.  Now let $d_0 = d$, let $\mu_0 = \mu$, and define the sequences $\{d_t\}$, $\{s_t\}$, and $\{\mu_t\}$ where
  \begin{align*}
    d_{t + 1} = d_t / 2 + d_t^{2/3}, && s_{t + 1} = s_t / 2, && \mathrm{and} && \mu_{t + 1} = \mu_t / 2 + \mu_t^{2/3}.
  \end{align*}
  We claim that
  \begin{enumerate}[(i)]
  \item\label{reduction:degree-bound} $\gamma^{-6/5} / 2 < d_j \leq (1 + \eps / 10)d / 2^j$
  \item\label{reduction:multiplicity-ub} $\mu_j \leq 8d_j^{1/6}$, and
  \item\label{reduction:multiplicity-lb} $\mu_t > \log^{10} d_t$ for all $t \in \{0, \dots, j - 1\}$.
  \end{enumerate}
  The proof is essentially the same as in~\cite[Theorem 3.1]{LoSu07}, so we omit it.  (For~\ref{reduction:multiplicity-ub}, the argument in~\cite[Theorem 3.1]{LoSu07} yields $\mu_j \leq 8\gamma d / 2^j$, and since $\gamma \leq (2^j / d)^{5/6}$, we have $\mu_j \leq 8 d^{1/6} / 2^{j / 6} \leq 8 d_j^{1/6}$, as desired.)

  Now by~\ref{reduction:multiplicity-lb}, we can apply Lemma~\ref{list-halving-lemma} $j$ times to obtain a sequence $H_0 \supseteq H_1 \supseteq \cdots \supseteq H_j$ of induced subgraphs of $H$, where $H_t$ is a cover graph for $G$ via $L_t$ for some $L_t$ satisfying $|L_{t}(v)| = s_t$ for all $v \in V(G)$, $\avgdeg_{L_t}(H_t) \leq d_t$, $\Delta_{L_t}(H_t) \leq (d_{t - 1} \log d_{t - 1}) / 2 + d_{t - 1}^{3/5} \leq d_t \log d_t$, and $\mu_{L_t}(H_t) \leq \mu_t$ for every $t \in \{1, \dots, j\}$.
  In particular, $\avgdeg_{L_j}(H_j) \leq d_j$ and by~\ref{reduction:multiplicity-ub}, $\mu_{L_j}(H_j) \leq d_j^{1/5}$, as required.
  By the lower bound of~\ref{reduction:degree-bound}, we can make $d_j$ as large as necessary by assuming $\gamma^{-1}$ is large, so the upper bound of~\ref{reduction:degree-bound} yields
  \begin{equation*}
    s_j \geq \frac{(1 + 9\eps / 10)d - 2^j}{2^j} \geq \frac{1 + 9\eps / 10}{1 + \eps / 10}d_j - 1 \geq (1 + 7\eps / 10)d_j.
  \end{equation*}
  Finally, by applying Proposition~\ref{trim-large-degree-from-list-prop} to $H_j$ and $L_j$, we obtain a cover graph $H'$ for $G$ via $L'$ for some $L'$ satisfying $|L'(v)| \geq (1 + 9(7\eps/10)/10)d_j \geq (1 + \eps / 2)d_j$ for every $v\in V(G)$, $\avgdeg_{L'}(H') \leq d_j$, and $\Delta_{L'}(H') \leq d_j\log^{1/2}d_j$, as desired.
\end{proof}

\end{document}